\newtheorem{theorem}{Theorem}[section]
\newtheorem{corollary}[theorem]{Corollary}
\newtheorem{lemma}[theorem]{Lemma}
\newtheorem{proposition}[theorem]{Proposition}
\newtheorem{remark}[theorem]{Remark}
\newtheorem{definition}[theorem]{Definition}
\numberwithin{equation}{section}
\title{\bf Differential Harnack inequalities and Perelman type entropy formulae for subelliptic operators 
}
\author{Bin Qian\thanks{Department of Mathematics, Changshu Institute of Technology,
Changshu, Jiangsu 215500,  China.
 E-mail: binqiancs@yahoo.com}
 }\date{}
\begin{document}
\maketitle

\begin{abstract}
In this paper, under the generalized curvature-dimension inequality  recently introduced by F. Baudoin and N. Garofalo, we obtain differential Harnack inequalities (Theorem \ref{thm-Schd}) for the positive solutions to the Sch\"odinger equation associated to subelliptic operator with potential. As  applications of the differential Harnack inequality, we derive the corresponding parabolic Harnack inequality (Theorem \ref{harnack1}).  Also we define the Perelman type entropy associated to subelliptic operators and derive its monotonicity (Theorem \ref{thm-perel}).
\end{abstract}

{\bf Keywords:} Differential Harnack inequalities, Perelman entropy, Curvature-dimension inequality, Subelliptic operators

{\bf AMS Classification Subjects 2010:} 53C21 58J35







\section{Introduction}\label{sec-intr}

 Let $M$ be a $C^{\infty}$ connected finite dimensional compact
manifold with a smooth measure $\mu$ and a second-order diffusion operator $L$ on $M$, locally subelliptic, satisfying $L1=0$,
$$
\int_MfLgd\mu=\int_M gLfd\mu, \ \ \int_M fLfd\mu\le0
$$
for every $f,g\in C^{\infty}(M)$. In the neighborhood of every point $x\in M$, It can be written as
$$
L=-\sum_{i=1}^mX_i^*X_i,
$$
where $X_1,X_2,\cdots, X_m$ are Lipschitz continuous, see \cite{Jerison,BG}. For any function $f,g\in
C^{\infty}(M)$, the carr\'e du champ  associated to $L$ is
\begin{equation}\label{Gamma1}
\Gamma(f,g)=\frac12(L(fg)-fL g-gL f)=\sum_{i=1}^mX_ifX_ig,
\end{equation}
and $\Gamma(f)=\Gamma(f,f)$.

In addition, as in \cite{BG} we assume that $M$ is endowed with
another smooth symmetric bilinear differential form, indicated with
$\Gamma^Z$, satisfying for $f,g\in C^{\infty}(M)$,
$$
\Gamma^Z(fg,h)=f\Gamma^Z(g,h)+g\Gamma^Z(f,h),
$$
and $\Gamma^Z(f)=\Gamma^Z(f,f)$.  We make the
following assumption (see \cite{BG} for detailed explanation) holds
throughout the paper:

{\bf Assumption:}  For any $f\in C^{\infty}(M)$, one has
$$
\Gamma(f,\Gamma^Z(f))=\Gamma^Z(f,\Gamma(f)).
$$

Denote
$$
\Gamma_2(f,g)=\frac12\left(L\Gamma(f,g)-\Gamma(f,Lg)-\Gamma(g,Lf)\right),
$$
$$
\Gamma_2^Z(f,g)=\frac12\left(L\Gamma^Z(f,g)-\Gamma^Z(f,Lg)-\Gamma^Z(g,Lf)\right),
$$
and $\Gamma_2(f)=\Gamma_2(f,f), \Gamma_2^Z(f)=\Gamma_2^Z(f,f)$.

Now we are ready to recall
\begin{definition}\label{def1}(due to \cite{BG}) We  say that $L$ satisfies the
generalized curvature-dimension inequality $CD(\rho_1,\rho_2,k,d)$ if there exists constants $
\rho_1\in \mathbb{R}$, $\rho_2> 0, k\ge0$ and $d\in[2,\infty]$ such
that  the inequality
$$
\Gamma_2(f)+\nu\Gamma_2^Z(f)\ge
\frac{1}{d}(Lf)^2+\left(\rho_1-\frac{k}{\nu}\right)\Gamma(f)+\rho_2\Gamma^Z(f),
$$
holds for all $f\in C^\infty(M)$ and every $\nu>0$.
 \end{definition}

 The above definition generalizes to
 the curvature dimension inequality $CD(K,m)$ introduced by D. Bakry and M. \'Emery, see for example \cite{Ba97}. Indeed,
 we only  need  to take $\rho_1=K$, $\rho_2=k=0$, $d=m$.
  Besides Laplace-Beltrami operators on compact
Riemannian manifolds with Ricci curvature bounded below and sub-Laplacian $\Delta_b$ in a closed pseudohermitian 3-manifold $(M^3,J,\theta)$, there
exists a wide class of examples satisfying the generalized
curvature-dimension condition, e.g. see \cite{BG}.

Denote the heat semigroup $P_t=e^{tL}$, due to the hypo-ellipticity of $L$,
one has the function $(t,x)\to P_tf(x)$ is smooth on $M\times (0,\infty)$
and $P_tf(x)=\int_Mp(x,y,t)f(y)d\mu(y)$, where $p(x,y,t)=p(y,x,t)>0$ is the so-called heat kernel associated to $P_t$.

 Suppose the subelliptic operator $L$ satisfies the generalized curvature-dimension inequality $CD(\rho_1,\rho_2,k,d)$, Baudoin and  his collaborators  (\cite{BG,BB2,BBG,BBGM,BK}) have obtained that 1). the heat semigroup $P_t$ is stochastically complete, i.e. $P_t1=1$; 2). Li-Yau type inequalities: $\Gamma(\log P_tf)+\frac{2}{3}\rho_2t\Gamma^Z(\log P_tf)\le \left(1+\frac{3k}{2\rho_2}-\frac{2\rho_1}{3}t\right)\frac{LP_tf}{P_tf}+\frac{d\rho_1^2}{6}t
 -\frac{d\rho_1}{2}\left(1+\frac{3k}{d\rho_2}\right)+\frac{d}{2t}\left(1+\frac{3k}{d\rho_2}\right)^2$; 3). Scale-invariant parabolic Harnack inequality; 4). Off-diagonal Gaussian upper bounds for the heat kernel $p(x,y,t)$; 5). Liouville type property; 6). Bonnet-Myers type theorem; 7). Volume comparison property, volume doubling property and distance comparison theorem etc.; 8). Functional inequalities such as Poincar\'e inequality, Log-Sobolev inequality; $\cdots\cdots$, which generalizes the works of S. T. Yau,  D. Bakry, M. Ledoux etc., see \cite{LY, Ba97, Ledoux} and references therein. See also \cite{CY,Qian2,Qian3,Qian4} for recent works on  gradient estimate and curvature property for subelliptic operators.

In the fundamental work \cite{LY}, P. Li and S. T. Yau established the wellknown gradient estimates (so-called Li-Yau inequalities) for the positive solutions to the heat equation $\partial_tu=\Delta u$ on the complete Riemannian manifolds, since it becomes a powerful tool in differential geometry, PDE, etc. It also plays an important role in the Perelman's solution to the Poicar\'e conjecture. As mentioned in the above section, F. Baudoin and N. Garofalo \cite{BG} have obtained the Li-Yau type inequality, but the method (semigroup tools) exploited in \cite{BG} (see also \cite{BBBQ} for three dimensional subelliptic models) is not adaptable for the diffusion operators with potential $L^V$ (Sch\"odinger operator).  since $L^V$ does not commute with the Feynman-Kac semigroup $P_t^V$ generated by $L^V$. While the Li-Yau method (maximum principle) exploited in \cite{LY} works for the Sch\"odinger case, hence it would be interesting to find certain method adaptable for the Sch\"odinger equation. This is the start point of this paper.

 This paper is devoted to study differetnial Harnack inequalities (Li-Yau type gradient estimates) for the positive solutions to the heat equation (Sch\"odinger equation) associated to subelliptic operators with potential, see section \ref{schd2} for the statement of Li-Yau type inequalities and section \ref{proof} for the proof. Section \ref{sec-harnack} is devoted to the Harnack inequalities for positive solutions to the associated Sch\"odinger equation. By applying the differential Harnack inequalities obtained in section \ref{schd2}, we study the Perelman type entropy and its monotonicity in section \ref{entropy}.

\section{Differential Harnack inequalities}\label{schd2}
 In this section, we shall study  the  Schr\"odinger equation $\partial_t u=(L-V)u$, where $L$ is the subelliptic diffusion operator as in Section
\ref{sec-intr} and the potential $V=V(x,t) \ge0$ defined on
$M\times[0,\infty)$ is $C^{\infty}$. Denote $L^V=L-V$.

Throughout this section, we assume that $L$ (rather than $L^V$) satisfies the generalized curvature-dimension inequality $CD(\rho_1,\rho_2,k,d)$ and there exists some positive constants
 $\gamma_1,\gamma_2,\theta$ such that
\begin{equation}\label{schd1}
\Gamma(V)\le \gamma_1^2, \Gamma^Z(V)\le \gamma_2^2,  LV\le \theta.
\end{equation}

For a given $C^1$
function $a(t):[0,\infty)\to [0,\infty)$ such that
 $a(0)=0 $, $\lim\limits_{t\to0}\frac{a(t)}{a'(t)}=0$, $\frac{a'}{a}>0$, $\frac{\int_0^ta(s)ds}{a(t)}>0$ and for any $T>0$, $\frac{a'^2}{a}$, $ \frac{a^3(t)}{(\int_0^ta(s)ds)^2}$, $\frac{1}{a'(t)}\left(\int_0^t
\frac{a^2(s)}{\int_0^sa(u)du}ds\right)^2,
\frac{1}{a'(t)}\left(\int_0^ta(s)ds\right)^2 $  are continuous and  integrable on the domain $(0,T]$.  We have the following Li-Yau type inequality for the
Schr\"odinger equation:

\begin{theorem}\label{thm-Schd}Let $u$ be a positive solution to the
Schr\"odinger equation \begin{equation}\label{schd0}\partial_tu=L^V u,\end{equation} such that
$u_0:=u(\cdot,0)$ is $C^{\infty}(M)$ and $u_0\ge0$, we have the following differential
Harnack inequality: for any $\varepsilon_1,\varepsilon_2\in(0,1)$, we have
\begin{equation}\label{LY1} \Gamma(\log u)
+b(t)\Gamma^Z(\log u)
-\alpha(t)\left((\log
u)_t+V\right)\le \varphi(t),
\end{equation}
where
$$
b(t)=\frac{2(1-\epsilon_2)\rho_2\int_0^ta(s)ds}{a(t)},\ \  \eta(t)=\frac{d}{4}\left((1+\varepsilon_1)\frac{a'}{a}+\frac{ka(t)}{\rho_2\int_0^ta(s)ds}-2\rho_1\right), $$
and $$ \alpha(t)=\frac{4}{d\cdot a(t)}\int_0^ta(s)\eta(s)ds,\ \varphi(t)=
\frac{1}{a(t)}\int_0^ta(s)\left(\frac{\gamma_1^2a(s)|\alpha-1|^2}{\epsilon_1
a'(s)}+\frac{b^2\gamma_2^2}{2\epsilon_2\rho_2}+\theta\alpha(s)+\frac{2\eta^2(s)}{d}\right)ds.\nonumber
$$

\end{theorem}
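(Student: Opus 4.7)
The plan is to adapt Li--Yau's maximum principle argument to the subelliptic setting with potential. Put $f=\log u$; the Schr\"odinger equation $u_t=Lu-Vu$ becomes
\[
f_t=Lf+\Gamma(f)-V,\qquad\text{equivalently}\qquad f_t+V=Lf+\Gamma(f).
\]
Introduce the auxiliary function
\[
F(x,t)=a(t)\Bigl[\Gamma(f)+b(t)\Gamma^Z(f)-\alpha(t)(f_t+V)-\varphi(t)\Bigr].
\]
Since $a(0)=0$, $F(\cdot,0)\equiv 0$, and the Harnack inequality (\ref{LY1}) is equivalent to $F\le 0$ on $M\times(0,T]$ for every $T>0$. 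By compactness of $M$, if this failed then $F$ would attain a positive maximum at some interior point $(x_0,t_0)$ with $t_0>0$, where
\[
\partial_t F\ge 0,\qquad LF\le 0,\qquad \Gamma(f,F)=0,\qquad \Gamma^Z(f,F)=0,
\]
the last two coming from the vanishing of the spatial differential of $F$ at $x_0$.

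The key computation is to expand $(\partial_t-L)F$ using $\partial_t\Gamma(f)=2\Gamma(f,f_t)$, $L\Gamma(f)=2\Gamma_2(f)+2\Gamma(f,Lf)$, the analogous formulas for $\Gamma^Z$, and $(\partial_t-L)(f_t+V)=2\Gamma(f,f_t)-LV$ obtained by differentiating the evolution equation. The dominant term is $-2a[\Gamma_2(f)+b\Gamma_2^Z(f)]$, accompanied by cross terms of the form $\Gamma(f,\Gamma(f))$, $\Gamma(f,\Gamma^Z(f))$, $\Gamma(f,Lf)$, $\Gamma(f,V)$ and their $\Gamma^Z$-analogues, plus a scalar $LV$-term and a time-derivative remainder involving $a',b',\alpha',\varphi'$. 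At $(x_0,t_0)$ the identities $\Gamma(f,F)=0$ and $\Gamma^Z(f,F)=0$ let me solve respectively for $\alpha\Gamma(f,f_t+V)$ and $\alpha\Gamma^Z(f,f_t+V)$, which after inserting $f_t=Lf+\Gamma(f)-V$ expresses $\alpha\Gamma(f,Lf)$ and $\alpha\Gamma^Z(f,Lf)$ in terms of the other cross terms and leaves the coefficient $2(\alpha-1)$ in front of $\Gamma(f,V)$ (this is the source of the $|\alpha-1|^2$ factor in $\varphi$).

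To bound $-2[\Gamma_2(f)+b\Gamma_2^Z(f)]$ from above I apply $CD(\rho_1,\rho_2,k,d)$ with $\nu=b(t)$, giving $\Gamma_2(f)+b\Gamma_2^Z(f)\ge \tfrac{1}{d}(Lf)^2+(\rho_1-k/b)\Gamma(f)+\rho_2\Gamma^Z(f)$. The remaining potential cross terms are controlled by Cauchy--Schwarz and (\ref{schd1}): $|\Gamma(f,V)|\le \gamma_1\sqrt{\Gamma(f)}$, $|\Gamma^Z(f,V)|\le \gamma_2\sqrt{\Gamma^Z(f)}$, $LV\le\theta$. Young's inequality with weights $\varepsilon_1,\varepsilon_2\in(0,1)$ then splits each potential contribution into a piece absorbed into the $\Gamma(f)$ and $\Gamma^Z(f)$ margins coming from CD, and a residual of the form $\gamma_1^2 a|\alpha-1|^2/(\varepsilon_1 a')$ or $b^2\gamma_2^2/(\varepsilon_2\rho_2)$ that reappears inside the integrand of $\varphi$. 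The surviving $-\tfrac{2}{d}(Lf)^2$ is completed against the remaining linear-in-$Lf$ term, whose coefficient turns out to be exactly $2\eta(t)$, leaving a leftover $2\eta^2/d$.

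The principal obstacle is the algebraic bookkeeping that forces the coefficients of $\Gamma(f)$, $\Gamma^Z(f)$, and $(f_t+V)$ in the resulting inequality to vanish simultaneously; this gives a coupled first-order ODE system for $b$ and $\alpha$ whose solutions, with the initial condition making $F$ vanish at $t=0$, are precisely
\[
b(t)=\frac{2(1-\varepsilon_2)\rho_2\int_0^ta(s)\,ds}{a(t)},\qquad \alpha(t)=\frac{4}{d\,a(t)}\int_0^ta(s)\eta(s)\,ds,
\]
and the remaining time-dependent residual is exactly the integrand of $\varphi$. The continuity and integrability hypotheses placed on $a$, $a'^2/a$, $a^3/(\int_0^t a)^2$ and the other listed combinations guarantee that $b,\alpha,\varphi$ are well-defined and controlled near $t=0$, so that $F$ extends continuously to $t=0$. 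With these choices, $(\partial_t-L)F\le 0$ at $(x_0,t_0)$, contradicting $\partial_tF\ge 0\ge LF$ together with $F(x_0,t_0)>0$; hence $F\le 0$ throughout $M\times[0,T]$, which is (\ref{LY1}).
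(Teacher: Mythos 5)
Your overall strategy coincides with the paper's: the same auxiliary quantity $\Gamma(f)+b\Gamma^Z(f)-\alpha(f_t+V)-\varphi$, the curvature--dimension inequality applied with $\nu=b(t)$, Young's inequality on the potential cross terms, and the same ODE system fixing $b,\alpha,\eta,\varphi$. However, two steps as you describe them do not go through. First, your treatment of the $\Gamma^Z$ cross term is off. The term that must be absorbed is $-2b\,\Gamma^Z(f,\Gamma(f))$, coming from $b\bigl(L\Gamma^Z(f)-2\Gamma^Z(f,f_t)\bigr)=2b\,\Gamma_2^Z(f)-2b\,\Gamma^Z(f,\Gamma(f))+2b\,\Gamma^Z(f,V)$. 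The paper converts it via the standing assumption $\Gamma(f,\Gamma^Z(f))=\Gamma^Z(f,\Gamma(f))$ into a constituent of $-2\Gamma(f,F)$, which then vanishes at the maximum (or is carried along as an exact identity). Your proposal instead invokes $\Gamma^Z(f,F)=0$ to ``solve for $\alpha\Gamma^Z(f,f_t+V)$''; but $\Gamma^Z(f,F)=0$ reads $\Gamma^Z(f,\Gamma(f))+b\,\Gamma^Z(f,\Gamma^Z(f))=\alpha\,\Gamma^Z(f,f_t+V)$ and so injects the third-order quantity $\Gamma^Z(f,\Gamma^Z(f))$, which appears nowhere else in the expansion and cannot be controlled. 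The commutation assumption is essential here and your sketch never uses it.

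Second, the concluding contradiction is not strict. With $F=a(t)[\cdots]$ the differential inequality you can actually derive is $(L-\partial_t)F+2\Gamma(f,F)\ge 0$ (the factor $a'F_0$ gained from differentiating $a$ exactly cancels the $\tfrac{a'}{a}F_0$ on the right of the paper's Lemma), so at a positive interior maximum you only obtain $0\le (L-\partial_t)F\le 0$, which is consistent rather than contradictory. To close the argument you must either retain the discarded positive square $\tfrac{2}{d}(Lf+\eta)^2$ so the inequality is strict when $F>0$, perturb $F$ by $\epsilon t$, or follow the paper: show $(L^V-\partial_t)(a\,u\,F_0)\ge 0$ (here the extra term $a F_0 (L-\partial_t)u = aF_0Vu$ is exactly what lets you pass from $L$ to $L^V$) and invoke the weak parabolic comparison principle for $L^V-\partial_t$ with initial data $0$. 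As written, your final step would fail.
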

\begin{remark}\label{rem-LY}
(i) For the elliptic operators (possibly with potential) satisfying the curvature dimension inequality
$CD(\rho,m)$ introduced in \cite{Ba97}, the corresponding result has been obtained in \cite{Qian1}.
Moreover, the local differential Harnack inequalities have been derived in \cite{Qian1}.

(ii) For diffusion operator $L^V=L$ (i.e. $V\equiv0$, see Proposition \ref{thm-LY} below), a similar  result has been obtained by
Baudoin and Garofalo \cite{BG} by different method, see Corollary
4.3 in \cite{BG}. The method in this paper is motivated by the one in
\cite{LX11,Qian1}, it works not only for the diffusion operators  but also
works for the Schr\"odinger operators (i.e. diffusion operators with
potential), see Section \ref{schd2}. While  the one  in \cite{BG} does not work for the Schr\"odinger  operator $L^V$. But due to the validity of the parabolic comparison theorem (or maximum principle), we assume $M$ is compact here.

(iii) In the case of sub-Laplacian $\Delta_b$ in a closed pseudohermitian 3-manifold $(M^3,J,\theta)$, in fact, $\Delta_b$ satisfies $CD(k,\frac12,1,2)$ where $k$ is the lower bound of Tanaka-Webster curvature. The above result generalizes  Theorem 1.1 in \cite{CKL} by Chang et al.
\end{remark}

A possible function $a(t)$ is $a(t)=t^{\gamma}$ with $\gamma>1$. In this case, Theorem \ref{thm-Schd} reduces to

\begin{corollary}\label{cor0}
Under the assumption of Theorem \ref{thm-Schd}, there exists positive constants $C_i\ (i=1,2,3,4)$ depends on $\rho_1,\rho_2,k,d,\gamma_1,\gamma_2,\gamma,\theta$, we have for all $t>0$,
\begin{align}
 \Gamma(\log u)
+\frac{\rho_2}{1+\gamma}t\Gamma^Z(\log u)
\le& \left(\frac32+\frac{(1+\gamma)k}{\gamma\rho_2}-\frac{2\rho_1}{1+\gamma}\right)\left((\log
u)_t+V\right)+\frac{d\left(1.5\gamma\rho_2+(1+\gamma)k\right)^2}{8(\gamma-1)\rho_2^2t}\nonumber\\
&\ -\frac{d\rho_1(1.5\gamma\rho_2+(1+\gamma)k)}{2\gamma\rho_2}+C_1t+C_2t^2+C_3t^3+C_4t^4.\label{cor0-dh}
\end{align}

\end{corollary}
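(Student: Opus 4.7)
The plan is to specialize Theorem \ref{thm-Schd} to the one-parameter family $a(t)=t^\gamma$ with $\gamma>1$ and then carry out each of the integrals in closed form. First I would verify the integrability hypotheses of Theorem \ref{thm-Schd}: with $a(t)=t^\gamma$ we have $a'^2/a=\gamma^2 t^{\gamma-2}$, $a^3/(\int_0^t a)^2=(\gamma+1)^2 t^{\gamma-2}$, and the two remaining quantities reduce to fixed multiples of $t^{\gamma+1}$ and $t^{\gamma+3}$, all continuous and integrable on $(0,T]$ precisely because $\gamma>1$. The hypothesis $\gamma>1$ is therefore used critically and will reappear as the $(\gamma-1)$ denominator in the final constant.

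Next I would compute $b$, $\eta$, $\alpha$ explicitly. The identities $a'/a=\gamma/t$ and $a(t)/\!\int_0^t\! a(s)\,ds=(\gamma+1)/t$ give
$b(t)=\frac{2(1-\varepsilon_2)\rho_2}{\gamma+1}\,t$ and $\eta(t)=\frac{d}{4t}\bigl[(1+\varepsilon_1)\gamma+\frac{k(\gamma+1)}{\rho_2}\bigr]-\frac{d\rho_1}{2}$, after which the moment $\alpha(t)=\frac{4}{d\,t^\gamma}\int_0^t s^\gamma\eta(s)\,ds$ is a one-line calculation using $\int_0^t s^{\gamma-1}ds=t^\gamma/\gamma$ and $\int_0^t s^\gamma ds=t^{\gamma+1}/(\gamma+1)$. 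To match the coefficients in the statement I would then set $\varepsilon_1=\varepsilon_2=\tfrac12$, which normalizes $b(t)=\rho_2 t/(\gamma+1)$ and produces the leading constant $\tfrac{3}{2}+\frac{(\gamma+1)k}{\gamma\rho_2}$ inside $\alpha$.

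Finally I would evaluate $\varphi(t)$ term by term. Writing $A=1.5\gamma\rho_2+(\gamma+1)k$ so that $\eta(s)=\frac{dA}{4\rho_2 s}-\frac{d\rho_1}{2}$, the key piece $\frac{2\eta^2(s)}{d}=\frac{dA^2}{8\rho_2^2 s^2}-\frac{dA\rho_1}{2\rho_2 s}+\frac{d\rho_1^2}{2}$ produces, after multiplication by $s^\gamma$, integration, and division by $t^\gamma$, exactly $\frac{dA^2}{8(\gamma-1)\rho_2^2\,t}-\frac{d\rho_1 A}{2\gamma\rho_2}+\frac{d\rho_1^2 t}{2(\gamma+1)}$, which are the two displayed ``main'' terms plus a linear contribution. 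The remaining three summands of the integrand—$\frac{\gamma_1^2 a(s)|\alpha(s)-1|^2}{\varepsilon_1 a'(s)}$, $\frac{b^2(s)\gamma_2^2}{2\varepsilon_2\rho_2}$, and $\theta\alpha(s)$—each become polynomials in $s$ of degree at most $2$ (using $a/a'=s/\gamma$ and the explicit form of $\alpha(s)$), so after averaging against $s^\gamma$ and dividing by $t^\gamma$ they yield powers $t$, $t^2$, $t^3$, $t^4$ with explicit coefficients depending on $\rho_1,\rho_2,k,d,\gamma_1,\gamma_2,\gamma,\theta$; collecting these as $C_1 t+C_2 t^2+C_3 t^3+C_4 t^4$ completes the bound.

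There is no conceptual obstacle; the argument is a substitution into Theorem \ref{thm-Schd} followed by elementary integration of power functions. The only genuine care-point is the linear-in-$t$ correction in $\alpha(t)=\alpha_0-\frac{2\rho_1 t}{\gamma+1}$ (arising from the $-d\rho_1/2$ piece of $\eta$): when this is carried through the term $-\alpha(t)\bigl((\log u)_t+V\bigr)$ in \eqref{LY1}, the time-independent part produces the coefficient in the statement while the remainder is treated as part of the polynomial tail. Everything else is bookkeeping of explicit constants.
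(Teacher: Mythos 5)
Your overall route is exactly the paper's (the paper gives no separate proof: it simply specializes Theorem \ref{thm-Schd} to $a(t)=t^{\gamma}$ with $\varepsilon_1=\varepsilon_2=\tfrac12$), and your computations of $b$, $\eta$, $\varphi$ are correct: $b(t)=\rho_2 t/(1+\gamma)$, $\eta(s)=\frac{dA}{4\rho_2 s}-\frac{d\rho_1}{2}$ with $A=1.5\gamma\rho_2+(1+\gamma)k$, and the $\frac{2\eta^2}{d}$ piece of $\varphi$ indeed produces the displayed $\frac{dA^2}{8(\gamma-1)\rho_2^2 t}$ and $-\frac{d\rho_1 A}{2\gamma\rho_2}$ terms, with everything else collapsing into a quartic polynomial in $t$ whose coefficients may be enlarged to positive constants.

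The one genuine flaw is your treatment of $\alpha(t)=\frac32+\frac{(1+\gamma)k}{\gamma\rho_2}-\frac{2\rho_1 t}{1+\gamma}$. You propose to keep only the time-independent part as the coefficient of $(\log u)_t+V$ and to push the remainder $\frac{2\rho_1}{1+\gamma}(t-1)\bigl((\log u)_t+V\bigr)$ into the polynomial tail $C_1t+\cdots+C_4t^4$. That step fails: the remainder multiplies $(\log u)_t+V$, which is a function of the solution and is not bounded above or below by constants depending only on $\rho_1,\rho_2,k,d,\gamma_1,\gamma_2,\gamma,\theta$ (and $\rho_1$ has no prescribed sign, so no one-sided bound rescues it). What the substitution actually yields is \eqref{cor0-dh} with coefficient $\frac32+\frac{(1+\gamma)k}{\gamma\rho_2}-\frac{2\rho_1}{1+\gamma}\,t$; comparison with Corollary \ref{cor10}, where the analogous coefficient is $1+\frac{(1+\gamma)k}{\gamma\rho_2}-\frac{2\rho_1}{1+\gamma}t$, makes it clear that the missing factor of $t$ in the printed statement is a typo rather than something your argument needs to (or can) produce. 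You should state the corollary with the $t$ present and drop the absorption step; with that correction your proof is complete.
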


Another possible function $a(t)$ is $a(t)=e^{\gamma \rho_1t}\left(e^{\gamma \rho_1t}-1\right)^2$ with $\gamma\cdot sgn(\rho_1)>0$, taking $\varepsilon_1=\varepsilon_2=\frac12$, Theorem \ref{thm-Schd} reduces to
\begin{corollary}\label{cor1}
Under the assumption of Theorem \ref{thm-Schd}, there exist positive constants $C_1, C_2$ depends on $\rho_1,\rho_2,k,d,\gamma_1,\gamma_2,\gamma,\theta$, we have for all $t>0$,
\begin{align}
 \Gamma(\log u)&
+\frac{\rho_2}{3\gamma \rho_1}\left(e^{\gamma\rho_1t}-1\right)e^{-\gamma\rho_1t}\Gamma^Z(\log u)
\le \alpha(t)\left((\log
u)_t+V\right)+C_1+\frac{C_2}{e^{\gamma\rho_1t}-1},\label{cor1-dh}
\end{align}
where $$
\alpha(t)=\frac{3}{2}+\frac{k}{\rho_2}-\frac{2}{3\gamma}+\left(\frac{2}{3\gamma}+\frac{k}{2\rho_2}\right)e^{-\gamma\rho_1t},\ 
$$
\end{corollary}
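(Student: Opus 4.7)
The plan is to obtain Corollary \ref{cor1} by direct substitution of the specific choice $a(t)=e^{\gamma\rho_1 t}(e^{\gamma\rho_1 t}-1)^2$ and $\varepsilon_1=\varepsilon_2=1/2$ into Theorem \ref{thm-Schd}, so the work is essentially a computation. First I would check that $a$ satisfies the hypotheses of Theorem \ref{thm-Schd}. Writing $u=e^{\gamma\rho_1 t}$, one has $a=u(u-1)^2$ and, since $\gamma\cdot\mathrm{sgn}(\rho_1)>0$ implies $\gamma\rho_1>0$, also $a'/a>0$, $a(0)=0$, and $a(t)/a'(t)\to 0$ as $t\to 0$. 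A clean antiderivative is available from the substitution $du=\gamma\rho_1 u\,ds$, giving
\[
\int_0^t a(s)\,ds=\frac{(e^{\gamma\rho_1 t}-1)^3}{3\gamma\rho_1},\qquad a'(t)=\gamma\rho_1\,e^{\gamma\rho_1 t}(e^{\gamma\rho_1 t}-1)(3e^{\gamma\rho_1 t}-1),
\]
from which the continuity/integrability conditions on $a'^2/a$, $a^3/(\int_0^s a)^2$, etc. on $(0,T]$ are straightforward verifications (each behaves like a bounded function times a power of $t$ near $0$).

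Next I would compute $b(t)$ and $\alpha(t)$. The formula for $b(t)$ in Theorem \ref{thm-Schd} with $\varepsilon_2=1/2$ becomes
\[
b(t)=\frac{\rho_2\int_0^t a(s)\,ds}{a(t)}=\frac{\rho_2}{3\gamma\rho_1}(e^{\gamma\rho_1 t}-1)\,e^{-\gamma\rho_1 t},
\]
matching the coefficient of $\Gamma^Z(\log u)$ in (\ref{cor1-dh}). For $\eta(t)$ one has $a'/a=3\gamma\rho_1+2\gamma\rho_1/(e^{\gamma\rho_1 t}-1)$ and $a(t)/\int_0^t a=3\gamma\rho_1 e^{\gamma\rho_1 t}/(e^{\gamma\rho_1 t}-1)$, so (with $\varepsilon_1=1/2$)
\[
\eta(s)=\frac{d}{4}\!\left[\frac{9\gamma\rho_1}{2}+\frac{3\gamma\rho_1}{e^{\gamma\rho_1 s}-1}+\frac{3k\gamma\rho_1\,e^{\gamma\rho_1 s}}{\rho_2(e^{\gamma\rho_1 s}-1)}-2\rho_1\right].
\]
Then $a(s)\eta(s)$ is a polynomial in $u=e^{\gamma\rho_1 s}$ (all singular factors cancel with $(e^{\gamma\rho_1 s}-1)^2$), and the substitution $du=\gamma\rho_1 u\,ds$ reduces $\int_0^t a\eta\,ds$ to elementary integrals of $(u-1)^j$. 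Dividing by $\tfrac{d}{4}a(t)$ and reorganizing in powers of $e^{-\gamma\rho_1 t}$ yields exactly
\[
\alpha(t)=\Bigl(\tfrac32+\tfrac{k}{\rho_2}-\tfrac{2}{3\gamma}\Bigr)+\Bigl(\tfrac{2}{3\gamma}+\tfrac{k}{2\rho_2}\Bigr)e^{-\gamma\rho_1 t},
\]
as claimed.

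The main obstacle is the bound on $\varphi(t)$. I would argue term by term: $\alpha-1$ is uniformly bounded (since $e^{-\gamma\rho_1 s}\in(0,1]$), while $a/a'=(e^{\gamma\rho_1 s}-1)/[\gamma\rho_1(3e^{\gamma\rho_1 s}-1)]$ is bounded on $[0,T]$; similarly $b^2$ and $\theta\alpha$ are bounded. The delicate term is $a(s)\eta^2(s)/d$: the singular parts of $\eta(s)$ behave like $(e^{\gamma\rho_1 s}-1)^{-1}$, so $a(s)\eta^2(s)$ stays bounded on $[0,T]$ and, more precisely, admits a decomposition into a uniformly bounded piece plus a piece of size $O(1)$ times $(e^{\gamma\rho_1 s}-1)$. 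Integrating and dividing by $a(t)=e^{\gamma\rho_1 t}(e^{\gamma\rho_1 t}-1)^2$, the bounded pieces contribute terms of order $(\int_0^t a(s)\,ds)/a(t)\sim(e^{\gamma\rho_1 t}-1)^{-1}$ while the others contribute bounded terms, giving the split
\[
\varphi(t)\le C_1+\frac{C_2}{e^{\gamma\rho_1 t}-1},
\]
with $C_1,C_2$ depending only on $\rho_1,\rho_2,k,d,\gamma_1,\gamma_2,\gamma,\theta$. Plugging these expressions into (\ref{LY1}) of Theorem \ref{thm-Schd} then yields (\ref{cor1-dh}). The only real subtlety is tracking the cancellations between singular factors of $a(t)^{-1}$ and $\int_0^t a\eta^2\,ds$; the $1/(e^{\gamma\rho_1 t}-1)$ behavior at $t\to 0$ is precisely the analogue of the $1/t$ Li--Yau term, as expected.
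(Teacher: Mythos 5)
Your proposal is correct and is exactly the route the paper (implicitly) takes: Corollary \ref{cor1} is obtained by substituting $a(t)=e^{\gamma\rho_1 t}(e^{\gamma\rho_1 t}-1)^2$ and $\varepsilon_1=\varepsilon_2=\tfrac12$ into Theorem \ref{thm-Schd}, and your computations of $\int_0^t a$, $a'/a$, $b(t)$, $\eta$, and $\alpha$ all check out, as does the final bound $\varphi(t)\le C_1+C_2/(e^{\gamma\rho_1 t}-1)$. The only blemish is expository: the singular term actually comes from the $B'^2 v$ piece of $a\eta^2$ giving $\frac{1}{a(t)}\int_0^t O(e^{\gamma\rho_1 s})\,ds\lesssim (e^{\gamma\rho_1 t}-1)^{-1}$, whereas the quantity $(\int_0^t a)/a(t)$ you cite there is in fact uniformly bounded — but this does not affect the conclusion.
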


Comparing with the Corollary \ref{cor0}, \eqref{cor1-dh} works for both small time and large time $t$, while \eqref{cor0-dh} works for the  finite time $t$.

As an application of the above Corollary, we have
\begin{corollary}\label{cor11} Assume $L$ satisfies curvature inequality $CD(\rho_1,\rho_2,k,d)$ with $\rho_1\neq0$. Let $u$ be a positive solution to the equation $L^Vu=0$ with $V$ defined on $M$, there exists a positive constant $C$ such that
\begin{align}
 \Gamma(\log u)&
+\frac{\rho_2}{4 |\rho_1|}\Gamma^Z(\log u)
\le \left(2+\frac{k}{\rho_2}\right)V+C.\label{cor12-dh}
\end{align}

\end{corollary}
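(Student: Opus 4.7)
The plan is to deduce this elliptic estimate from the parabolic estimate \eqref{cor1-dh} by taking $t\to\infty$. Since $V=V(x)$ is time-independent and $u$ satisfies $L^Vu=0$, the same $u$, viewed as constant in $t$, is a positive solution of the Schrödinger evolution $\partial_tu=L^Vu$ with $u_0=u\in C^\infty(M)$. Hence Corollary \ref{cor1} applies, and in \eqref{cor1-dh} the quantity $(\log u)_t+V$ reduces to $V$.

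Next I would choose the free parameter $\gamma$ so that the limiting coefficient of $\Gamma^Z(\log u)$ matches the target $\frac{\rho_2}{4|\rho_1|}$. The coefficient in \eqref{cor1-dh} equals $\frac{\rho_2}{3\gamma\rho_1}(1-e^{-\gamma\rho_1 t})$, whose limit as $t\to\infty$ exists because $\gamma\rho_1=|\gamma||\rho_1|>0$ and equals $\frac{\rho_2}{3|\gamma||\rho_1|}$. Picking $|\gamma|=\frac{4}{3}$ with $\operatorname{sgn}\gamma=\operatorname{sgn}\rho_1$ (which respects the sign hypothesis $\gamma\cdot\operatorname{sgn}\rho_1>0$ of Corollary \ref{cor1}) produces exactly $\frac{\rho_2}{4|\rho_1|}$.

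With this $\gamma$ fixed, I let $t\to\infty$ in \eqref{cor1-dh}. Since $\log u$ is $t$-independent and the constants $C_1,C_2$ are $t$-independent by the statement of Corollary \ref{cor1}, both sides converge pointwise: on the right, $e^{-\gamma\rho_1 t}\to0$ and $C_2/(e^{\gamma\rho_1 t}-1)\to0$, so the limit reads
$$
\Gamma(\log u)+\frac{\rho_2}{4|\rho_1|}\Gamma^Z(\log u)\le \alpha_\infty\,V+C_1,
$$
where $\alpha_\infty=\frac{3}{2}+\frac{k}{\rho_2}-\frac{2}{3\gamma}=\frac{3}{2}+\frac{k}{\rho_2}-\frac{\operatorname{sgn}\rho_1}{2}$.

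To finish, I evaluate $\alpha_\infty=1+k/\rho_2$ if $\rho_1>0$ and $\alpha_\infty=2+k/\rho_2$ if $\rho_1<0$, so in all cases $\alpha_\infty\le 2+k/\rho_2$. Because $V\ge0$, this immediately gives \eqref{cor12-dh} with $C=C_1$. There is no real obstacle; the only step that needs a bit of care is the sign bookkeeping for $\gamma$, which must simultaneously satisfy $\gamma\cdot\operatorname{sgn}\rho_1>0$ and produce the correct $\Gamma^Z$ coefficient in the limit.
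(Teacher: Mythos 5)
Your proposal is correct and is precisely the derivation the paper intends (it introduces Corollary \ref{cor11} as "an application of the above Corollary" \ref{cor1}): view the stationary solution of $L^Vu=0$ as a time-independent solution of $\partial_tu=L^Vu$, apply \eqref{cor1-dh}, and let $t\to\infty$ with $|\gamma|=4/3$ so that $\frac{\rho_2}{3\gamma\rho_1}\to\frac{\rho_2}{4|\rho_1|}$ and $\alpha(t)\to\frac32+\frac{k}{\rho_2}-\frac{2}{3\gamma}\le 2+\frac{k}{\rho_2}$. Your sign bookkeeping for $\gamma$ and the use of $V\ge0$ in the final bound are exactly right.
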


In particular, for a given $C^1$ function
 $a(t):[0,\infty)\to [0,\infty)$ such that
 $a(0)=0 $, $\lim\limits_{t\to0}\frac{a(t)}{a'(t)}=0$, $\frac{a'}{a}>0$, $\frac{\int_0^ta(s)ds}{a(t)}>0$  and for any $T>0$, $\frac{a'^2}{a}$, $ \frac{a^3(t)}{(\int_0^ta(s)ds)^2}$
 are continuous and  integrable on the domain $(0,T]$, we have  Li-Yau type differential Harnack inequalities for subelliptic diffusion operator $L$ (i.e. $V\equiv0$).

\begin{proposition}\label{thm-LY} Let $u$ be a positive solution
to the heat equation $\partial_tu=Lu$ such that $u_0:=u(\cdot,0)\in
C^{\infty}(M)$ with $u_0\ge0$, we have the following differential Harnack inequality:
\begin{equation}\label{LY1} \Gamma(\log u)
+\frac{2\rho_2\int_0^ta(s)ds }{a(t)}\Gamma^Z(\log u) -\alpha(t)(\log u)_t\le
\varphi(t),
\end{equation}
where
$$
  \eta(t)=\frac{d}{4}\left(\frac{a'}{a}+\frac{ka(t)}{\rho_2\int_0^ta(s)ds}-2\rho_1\right), \   \alpha(t)=\frac{4
\int_0^ta(s)\eta(s)ds}{d\cdot a(t)},\ \varphi(t)=\frac{2\int_0^ta(s)\eta^2(s)\,ds}{d\cdot a(t)}.
$$

\end{proposition}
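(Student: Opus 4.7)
The plan is to obtain Proposition~\ref{thm-LY} as the $V \equiv 0$ specialization of Theorem~\ref{thm-Schd}, passing to the limit $\varepsilon_1, \varepsilon_2 \downarrow 0$ at the end. When $V$ vanishes identically, the standing assumption \eqref{schd1} holds with $\gamma_1 = \gamma_2 = \theta = 0$, so the three summands $\frac{\gamma_1^2 a(s)|\alpha(s)-1|^2}{\varepsilon_1 a'(s)}$, $\frac{b(s)^2 \gamma_2^2}{2\varepsilon_2 \rho_2}$, and $\theta\alpha(s)$ appearing in the integrand of $\varphi(t)$ vanish identically, and no negative power of $\varepsilon_j$ survives in either $\alpha$ or $\varphi$. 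Sending $\varepsilon_1, \varepsilon_2 \downarrow 0$ in \eqref{LY1}, the coefficient $b(t)$ collapses to $\frac{2\rho_2 \int_0^t a(s)\,ds}{a(t)}$, $\eta(t)$ to $\frac{d}{4}\bigl(\frac{a'}{a}+\frac{ka(t)}{\rho_2 \int_0^t a(s)\,ds}-2\rho_1\bigr)$, $\varphi(t)$ to $\frac{2\int_0^t a(s)\eta^2(s)\,ds}{d\,a(t)}$, and $-\alpha(t)((\log u)_t+V)$ to $-\alpha(t)(\log u)_t$. The two extra integrability conditions imposed in Section~\ref{schd2} beyond those stated for Proposition~\ref{thm-LY} were used exclusively to bound the $V$-dependent summands, so they are not required here; the hypotheses listed for Proposition~\ref{thm-LY} are exactly what the limiting $\eta, \alpha, \varphi$ need for continuity and integrability on $(0,T]$.

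For the direct argument underlying Theorem~\ref{thm-Schd} specialized to $V = 0$, I would set $f = \log u$, so that $f_t = Lf + \Gamma(f)$, and apply the Li--Yau maximum principle method of \cite{LY,Qian1} to the functional
$$F(x,t) = a(t)\bigl[\Gamma(f) + b(t)\Gamma^Z(f) - \alpha(t)\,f_t - \varphi(t)\bigr].$$
The normalizations $a(0) = 0$ and $\lim_{t \to 0}a(t)/a'(t) = 0$ ensure that $\limsup_{t \downarrow 0} F(\cdot, t) \le 0$, so it suffices to assume that $F$ attains a positive interior maximum at some $(x_0,t_0)$ with $t_0 > 0$ and derive a contradiction from $\partial_t F \ge 0$, $LF \le 0$, and $\Gamma(F,\cdot) = 0$. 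A direct calculation using $(\partial_t - L)\Gamma(f) = -2\Gamma_2(f) + 2\Gamma(f,\Gamma(f))$ and $(\partial_t - L)\Gamma^Z(f) = -2\Gamma_2^Z(f) + 2\Gamma^Z(f,\Gamma(f))$ presents $(\partial_t - L)F$ as a multiple of $\Gamma_2(f) + \nu \Gamma_2^Z(f)$ plus gradient cross terms, with $\nu$ chosen to match the ratio between the $\Gamma_2$ and $\Gamma_2^Z$ coefficients; the inequality $CD(\rho_1,\rho_2,k,d)$ then produces the crucial lower bound on this combination in terms of $(Lf)^2$, $\Gamma(f)$, and $\Gamma^Z(f)$.

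The main obstacle is handling the gradient cross terms $\Gamma(f,\Gamma(f))$ and $\Gamma^Z(f,\Gamma(f))$. These are interchangeable via the standing Assumption $\Gamma(f,\Gamma^Z(f)) = \Gamma^Z(f,\Gamma(f))$, and at the maximum point the extremality relation $\Gamma(F,\cdot) = 0$ supplies a pointwise linear identity among the gradients of $\Gamma(f)$, $\Gamma^Z(f)$, and $f_t$ that eliminates them. Substituting this identity together with $Lf = f_t - \Gamma(f)$ reduces the remaining scalar inequality to a perfect-square estimate of the form $-\frac{1}{d}\bigl(Lf + (\text{linear in }\Gamma(f),\Gamma^Z(f),f_t)\bigr)^2 \le 0$, provided $\alpha$ and $\varphi$ satisfy the first-order ODEs $(a\alpha)' = \tfrac{4}{d}a\eta$ and $(a\varphi)' = \tfrac{2}{d}a\eta^2$ with vanishing initial data; integrating these ODEs yields exactly the formulas $\alpha(t) = \frac{4\int_0^t a\eta\,ds}{d\,a(t)}$ and $\varphi(t) = \frac{2\int_0^t a\eta^2\,ds}{d\,a(t)}$ of the statement. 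Relative to Theorem~\ref{thm-Schd}, no Young-inequality splitting is required to isolate $V$-dependent contributions, which accounts for the streamlining and the weaker integrability hypotheses on $a$.
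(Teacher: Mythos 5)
Your argument is correct, and its computational core is identical to the paper's: the same functional $F=\Gamma(f)+b(t)\Gamma^Z(f)-\alpha(t)f_t-\varphi(t)$ with $b(t)=\frac{2\rho_2\int_0^ta(s)ds}{a(t)}$, the same cancellation of the cross terms $\Gamma(f,\Gamma(f))$ and $\Gamma^Z(f,\Gamma(f))$ against $2\Gamma(f,F)$ via the standing Assumption, the same use of $CD(\rho_1,\rho_2,k,d)$ with $\nu=b(t)$, the same completion of the square in $Lf=f_t-\Gamma(f)$, and the same first-order equations $(a\alpha)'=\tfrac{4}{d}a\eta$, $(a\varphi)'=\tfrac{2}{d}a\eta^2$. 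Two points diverge. First, your opening reduction to Theorem \ref{thm-Schd} with $\gamma_1=\gamma_2=\theta=0$ and $\varepsilon_1,\varepsilon_2\downarrow 0$ is a legitimate shortcut, and your observation that the two extra integrability hypotheses on $a$ in Section 2 only control the $V$-dependent summands is accurate; but since justifying their removal forces you to reopen the proof of Theorem \ref{thm-Schd}, the limiting argument saves little over the direct computation you then sketch. Second, and more substantively, you close with the classical Li--Yau device: locate an interior positive maximum, use $X_iF=0$ (hence $\Gamma(f,F)=0$) and Bony's maximum principle $LF\le 0$ there. The paper instead applies the parabolic comparison theorem to the auxiliary function $a(t)uF$: since $(L-\partial_t)u=0$, one has $(L-\partial_t)(auF)=au\bigl((L-\partial_t)F+2\Gamma(f,F)\bigr)-a'uF\ge 0$, and $a(0)=0$ then forces $F\le 0$ everywhere; the extra factor $u$ turns $2au\Gamma(f,F)$ into $2a\Gamma(u,F)$, i.e.\ exactly the cross term produced by the product rule for $L$, so no maximum point and no vanishing-gradient condition are ever needed. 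Both closings are valid on a compact $M$ (Bony's result covers the degenerate elliptic case), so this is a genuine but harmless difference of route; the comparison-theorem version has the advantage of extending more readily to settings where an interior maximum is not guaranteed.
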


Taking $a(t)=t^{\gamma}$ with $\gamma>1$, then
$\eta(t)=\frac{d}{4}\left(\left(\gamma\rho_2+(\gamma+1)k\right)\frac{1}{\rho_2 t}-2\rho_1\right),$
 Proposition \ref{thm-LY} reduces to

\begin{corollary}\label{cor10}
Let $u$ be a positive solution to the heat
equation $\partial_tu=Lu$ such that $u_0=u(\cdot,0)\in C^{\infty}(M)$ with $u_0\ge0$,
we have for $\gamma>1$,
\begin{align}\label{gradient1}
\Gamma(\log u)+\frac{2\rho_2}{1+\gamma}t\Gamma^Z(\log u)\le&
\left(1+\frac{(1+\gamma)k}{\gamma\rho_2}-\frac{2\rho_1}{1+\gamma}t\right)(\log
u)_t+\frac{d\rho_1^2t}{2(1+\gamma)}-\frac{d\rho_1}{2}
\left(1+\frac{(1+\gamma)k}{\gamma\rho_2}\right)\nonumber\\
&+\frac{d\gamma^2}{8(\gamma-1)t}\left(1+\frac{(1+\gamma)k}{\gamma\rho_2}\right)^2.
\end{align}

If $\rho_1\ge0$, it is easy to see \eqref{gradient1} reduces to
 \begin{align}\label{gradient1-10}
\Gamma(\log u)+\frac{2\rho_2}{1+\gamma}t\Gamma^Z(\log u)\le&
\left(1+\frac{(1+\gamma)k}{\gamma\rho_2}\right)(\log
u)_t+\frac{d\gamma^2}{8(\gamma-1)t}\left(1+\frac{(1+\gamma)k}{\gamma\rho_2}\right)^2.
\end{align}
\end{corollary}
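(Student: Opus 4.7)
The plan is to deduce Corollary \ref{cor10} directly from Proposition \ref{thm-LY} by specializing to $a(t)=t^{\gamma}$ with $\gamma>1$ and evaluating the three integrals that enter the statement cleanly. The bulk of the work is bookkeeping rather than a new idea: compute $\eta(t)$, then $\alpha(t)=\frac{4}{d\,a(t)}\int_0^t a(s)\eta(s)\,ds$, then $\varphi(t)=\frac{2}{d\,a(t)}\int_0^t a(s)\eta^2(s)\,ds$, and match coefficients.

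First I would check the hypotheses of Proposition \ref{thm-LY} for $a(t)=t^\gamma$: one has $a(0)=0$, $a(t)/a'(t)=t/\gamma\to 0$, $a'/a=\gamma/t>0$, and $\int_0^t a(s)ds=t^{\gamma+1}/(\gamma+1)$, so $\int_0^t a/a>0$; the required integrability of $(a')^2/a=\gamma^{2}t^{\gamma-2}$ and $a^3/(\int_0^t a)^2=(\gamma+1)^2 t^{\gamma-2}$ at $0$ is equivalent to $\gamma>1$, which is the standing assumption. Plugging $a'/a=\gamma/t$ and $a(t)/\int_0^t a(s)ds=(\gamma+1)/t$ into the formula for $\eta$ in Proposition \ref{thm-LY} yields $\eta(t)=\frac{d}{4}\left(\frac{\gamma\rho_2+(\gamma+1)k}{\rho_2 t}-2\rho_1\right)$, as stated.

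Next I would evaluate $\alpha(t)$ by splitting $a(s)\eta(s)=s^\gamma\eta(s)$ into a $s^{\gamma-1}$ term and a $s^\gamma$ term; integrating from $0$ to $t$ gives $\frac{t^\gamma}{\gamma}$ and $\frac{t^{\gamma+1}}{\gamma+1}$ respectively, and after dividing by $\frac{d}{4}\,a(t)=\frac{d}{4}t^\gamma$ one obtains $\alpha(t)=1+\frac{(\gamma+1)k}{\gamma\rho_2}-\frac{2\rho_1 t}{\gamma+1}$. For $\varphi(t)$ I expand $\eta^2(s)$ as a quadratic in $1/s$, giving three integrands proportional to $s^{\gamma-2}$, $s^{\gamma-1}$, $s^\gamma$; the first integrates to $t^{\gamma-1}/(\gamma-1)$ (the only place where the restriction $\gamma>1$ is sharp), and after multiplication by $\frac{2}{d\,t^\gamma}$ and regrouping, $\varphi(t)$ becomes precisely the three-term right-hand side of \eqref{gradient1}. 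Combining this with the coefficient $\frac{2\rho_2\int_0^t a(s)ds}{a(t)}=\frac{2\rho_2 t}{\gamma+1}$ in front of $\Gamma^Z(\log u)$ yields \eqref{gradient1}.

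For \eqref{gradient1-10}, I would invoke the elementary observation that $CD(\rho_1,\rho_2,k,d)$ with $\rho_1\ge 0$ implies $CD(0,\rho_2,k,d)$, since decreasing $\rho_1$ only weakens the defining inequality in Definition \ref{def1}. Applying \eqref{gradient1} with $\rho_1$ replaced by $0$ eliminates the two $\rho_1$-terms and gives \eqref{gradient1-10}. There is no substantive obstacle here; the only subtlety worth flagging is the role of $\gamma>1$, which enters solely through the integral $\int_0^t s^{\gamma-2}ds$ in $\varphi(t)$ and is reflected in the factor $1/(\gamma-1)$ appearing in the leading $1/t$ term of \eqref{gradient1}.
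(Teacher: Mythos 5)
Your proposal is correct and follows the same route as the paper, which obtains the corollary simply by substituting $a(t)=t^{\gamma}$ into Proposition \ref{thm-LY}; your computations of $\eta$, $\alpha$, $\varphi$ and of the coefficient $\frac{2\rho_2\int_0^t a}{a}=\frac{2\rho_2 t}{1+\gamma}$ all check out, as does the observation that $\gamma>1$ is needed exactly for $\int_0^t s^{\gamma-2}\,ds$. Your justification of \eqref{gradient1-10} via the monotonicity $CD(\rho_1,\rho_2,k,d)\Rightarrow CD(0,\rho_2,k,d)$ for $\rho_1\ge 0$ is the correct reading of the paper's ``it is easy to see'' (a direct term-by-term comparison would not work, since $(\log u)_t$ has no a priori sign).
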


\begin{remark}
Choose $\gamma=2$, Corollary \ref{cor1} reduces to
\begin{equation*}
\Gamma(\log u)+\frac{2\rho_2}{3}t\Gamma^Z(\log u)\le \left(1+\frac{ 3k}{2\rho_2}-\frac{2\rho_1t}{3}\right)(\log u)_t+\frac{d\rho_1^2}{6}t-\frac{d\rho_1}{2}\left(1+\frac{ 3k}{2\rho_2}\right)
+\frac{d\left(1+\frac{3k}{2\rho_2}\right)^2}{2t},
\end{equation*}
which is nothing less than theorem 5.1 in \cite{BG}.
In particular, if $\rho_1\ge 0$, we have
\begin{equation}\label{gradient1-100}
\Gamma(\log u)+\frac{2\rho_2}{3}t\Gamma^Z(\log u)\le \left(1+\frac{ 3k}{2\rho_2}\right)(\log u)_t+\frac{d\left(1+\frac{3k}{2\rho_2}\right)^2}{2t}.
\end{equation}

As a consequence, we can get the Liouville property if $\rho_1\ge0$, i.e. there is no positive  constant solution to $Lu=0$.
\end{remark}

Taking $a(t)=e^{\gamma
\rho_1t}\left(1-e^{\gamma \rho_1t}\right)^{\beta}$ with  $\beta>1$ even,  $\gamma\ge\frac{2\rho_2}{(\rho_2+k)(1+\beta)}$, then $\frac{a'}{a}=\frac{\gamma\rho_1((1+\beta)e^{\gamma\rho_1t}-1)}{e^{\gamma\rho_1t}-1}$, $\frac{\int_0^ta(s)ds}{a(t)}=\frac{1}{(1+\beta)\gamma\rho_1}(e^{\gamma\rho_1t}-1)e^{-\gamma\rho_1t}$ and
$\eta(t)=\frac{d\rho_1}{4\rho_2}\frac{(\gamma-2)\rho_2-\left(\gamma(\rho_2+k)(1+\beta)
-2\rho_2\right)e^{\gamma \rho_1 t}}{1-e^{\gamma\rho_1 t}}$. Hence Proposition \ref{thm-LY} reduces to

\begin{corollary}\label{cor2}
 Let $u$ be a positive solution to the heat
equation $\partial_tu=Lu$ such that $u_0:=u(\cdot,0)\in C^{\infty}(M)$ with $u_0\ge0$,
we have for $\beta>1 \mbox{ even},\gamma\ge\frac{2\rho_2}{(\rho_2+k)(1+\beta)}$,
\begin{align}\label{gradient2}
\Gamma(\log u)&-\frac{2\rho_2}{(1+\beta)\gamma \rho_1}\left(1-e^{\gamma\rho_1t}\right)e^{-\gamma\rho_1t}\Gamma^Z(\log u)
\le \alpha(t)(\log
u)_t+\varphi(t),\end{align}
where
$$
\alpha(t)=1+\frac{(1+\beta)k}{\beta\rho_2}e^{-\gamma\rho_1t}+\left(\frac{2}{(1+\beta)\gamma}-
\frac{k}{\rho_2}\right)(1-e^{\gamma\rho_1t})e^{-\gamma\rho_1t},
$$
and
$$\aligned
\varphi(t)&=\frac{d\rho_1\left(\gamma(\rho_2+k)(1+\beta)-2\rho_2\right)^2}{8\gamma(1+\beta)\rho_2^2}\left(1-e^{-\gamma\rho_1t}\right)
+\frac{d\rho_1\gamma(\rho_2\beta+k+k\beta)^2}{8\rho_2^2(\beta-1)}\frac{e^{-\gamma\rho_1t}}{e^{\gamma\rho_1t}-1}
\\&\ \ \ +\frac{d\rho_1\gamma(\gamma(\rho_2+k)(1+\beta)-2\rho_2)(\rho_2\beta+k+k\beta)}{4\beta\rho_2^2}e^{-\gamma\rho_1t}.
\endaligned$$
In particular, let $\gamma=\frac{2\rho_2}{(\rho_2+k)(1+\beta)}$ ($\beta>1$ even),  we have
\begin{align}\label{gradient3}
\Gamma(\log u)&+\frac{\rho_2+k}{ \rho_1}\left(1-e^{-\frac{2\rho_1\rho_2t}{(\rho_2+k)(1+\beta)}}\right)
\Gamma^Z(\log u)
\le \frac{k+\beta k+\beta\rho_2}{\beta\rho_2}e^{-\frac{2\rho_1\rho_2t}{(\rho_2+k)(1+\beta)}}(\log
u)_t\nonumber \\
&+\frac{d\rho_1(\beta\rho_2+k+k\beta)^2}{4\rho_2(\rho_2+k)(\beta+1)(\beta-1)}
\frac{e^{-\frac{2\rho_1\rho_2t}{(\rho_2+k)(1+\beta)}}}{e^{\frac{2\rho_1\rho_2t}{(\rho_2+k)(1+\beta)}}-1}.
\end{align}
It yields the following lower bound for $(\log u)_t$:
$$
(\log u)_t\ge- \frac{d(\beta\rho_2+k+k\beta)}{4(\rho_2+k)(\beta+1)(\beta-1)}\frac{\rho_1}{e^{\frac{2\rho_1\rho_2t}{(\rho_2+k)(1+\beta)}}-1}.
$$

\end{corollary}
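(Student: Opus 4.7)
The plan is to view Corollary \ref{cor2} as a direct specialization of Proposition \ref{thm-LY} to the one-parameter family $a(t)=e^{\gamma\rho_1 t}(1-e^{\gamma\rho_1 t})^\beta$, with $\beta>1$ even and $\gamma \ge 2\rho_2/((\rho_2+k)(1+\beta))$. The work reduces to algebra: verifying the hypotheses on $a$ and then computing $\eta$, $\alpha$, and $\varphi$ explicitly in closed form.

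First I would check the admissibility of $a(t)$. Near $t=0^+$ one has $1-e^{\gamma\rho_1 t}\sim -\gamma\rho_1 t$, so $a(t)\sim (\gamma\rho_1)^\beta t^\beta$ (the exponent is even, hence positive). This behaves like the pure-power case $a(t)=t^\beta$ with $\beta>1$, so the boundary behaviour $a(0)=0$, $\lim_{t\to 0}a(t)/a'(t)=0$ and the integrability conditions on $a'^2/a$ and $a^3/A^2$ follow as for the power case, where $A(t):=\int_0^t a(s)\,ds$. Next, by logarithmic differentiation,
\begin{equation*}
\frac{a'}{a}=\gamma\rho_1+\frac{-\beta\gamma\rho_1 e^{\gamma\rho_1 t}}{1-e^{\gamma\rho_1 t}}=\frac{\gamma\rho_1\bigl((1+\beta)e^{\gamma\rho_1 t}-1\bigr)}{e^{\gamma\rho_1 t}-1},
\end{equation*}
and the substitution $w=1-e^{\gamma\rho_1 s}$, $dw=-\gamma\rho_1 e^{\gamma\rho_1 s}ds$ gives $A(t)=-(1-e^{\gamma\rho_1 t})^{1+\beta}/(\gamma\rho_1(1+\beta))$, hence $A(t)/a(t)=(e^{\gamma\rho_1 t}-1)e^{-\gamma\rho_1 t}/(\gamma\rho_1(1+\beta))$, matching the quantities displayed just before Corollary \ref{cor2}. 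Plugging these two identities into the definition $\eta=\frac{d}{4}(a'/a+ka/(\rho_2 A)-2\rho_1)$ and collecting the $e^{\gamma\rho_1 t}$ and constant terms over the common denominator $e^{\gamma\rho_1 t}-1$ recovers the stated formula for $\eta(t)$.

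The main work lies in evaluating $\alpha(t)$ and $\varphi(t)$. I would write $a\eta=\frac{d}{4}(a'+ka^2/(\rho_2 A)-2\rho_1 a)$, so that $\int_0^t a\eta\,ds$ decomposes into $a(t)$, a multiple of $A(t)$, and the single non-obvious piece $\int_0^t (A')^2/A\,ds$. The same change of variables $w=1-e^{\gamma\rho_1 s}$ turns the last integrand into $-\gamma\rho_1(1+\beta)(1-w)w^{\beta-1}$, which integrates elementarily. Dividing by $a(t)$ and rewriting in terms of $e^{-\gamma\rho_1 t}$ yields the stated $\alpha(t)$. For $\varphi$, I would expand $\eta^2$ into three squared contributions and three cross terms, each of which is again $a(s)$ times a rational function of $e^{\gamma\rho_1 s}$ integrable by the same substitution; recollecting in powers of $e^{-\gamma\rho_1 t}$ produces the three summands of the displayed $\varphi(t)$, with prefactors $(1-e^{-\gamma\rho_1 t})$, $e^{-\gamma\rho_1 t}/(e^{\gamma\rho_1 t}-1)$, and $e^{-\gamma\rho_1 t}$.

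The second half of the corollary, the specialization $\gamma=2\rho_2/((\rho_2+k)(1+\beta))$, is then a substitution check: with this choice the factor $\gamma(\rho_2+k)(1+\beta)-2\rho_2$ vanishes, killing the first and third summands of $\varphi$, while $\alpha$ collapses to the single exponential displayed. The final lower bound on $(\log u)_t$ follows by dropping the nonnegative terms $\Gamma(\log u)$ and $\Gamma^Z(\log u)/|\rho_1|\cdot(\rho_2+k)(1-e^{-2\rho_1\rho_2 t/\cdots})$ from \eqref{gradient3} (note this term has the right sign for $\rho_1>0$ and $\rho_1<0$ by the hypothesis $\gamma\cdot\mathrm{sgn}(\rho_1)>0$) and solving for $(\log u)_t$. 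The principal obstacle is the bookkeeping in evaluating $\varphi(t)$: six integrals must be computed, combined, and simplified so that their telescoping under the critical choice of $\gamma$ is manifest; this cancellation is the natural consistency check that the calculation has been carried out correctly.
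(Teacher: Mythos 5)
Your proposal is correct and follows exactly the route the paper intends: Corollary \ref{cor2} is presented as a direct specialization of Proposition \ref{thm-LY} to $a(t)=e^{\gamma\rho_1 t}\left(1-e^{\gamma\rho_1 t}\right)^{\beta}$, and your computations of $\frac{a'}{a}$, $\frac{\int_0^t a}{a}$, $\eta$, $\alpha$, $\varphi$ via the substitution $w=1-e^{\gamma\rho_1 s}$ reproduce the quantities displayed just before the corollary. The remaining steps — the vanishing of the two summands of $\varphi$ carrying the factor $\gamma(\rho_2+k)(1+\beta)-2\rho_2$ at the critical choice of $\gamma$, and dropping the nonnegative $\Gamma$ and $\Gamma^Z$ terms to extract the lower bound on $(\log u)_t$ — are exactly the paper's argument.
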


Comparing with Corollary \ref{cor10}, \eqref{gradient2} works for both small time and large time $t$ while \eqref{gradient1} only works for finite time $t$ in the case of $\rho_1\ge0$.




\section{Proof of Theorem \ref{thm-Schd}}\label{proof}

In this section, we give the proof of Theorem \ref{thm-Schd}. The method of the proof is inspired by \cite{Qian1,LX11}.

Assume $u$ is a positive solution to the Sch\"odinger equation $\partial_tu=L^Vu$.   Let $f=\log u$, we have
\begin{equation}\label{HJ2}
Lf  +\Gamma(f)=f_t+V,
\end{equation}
where $\Gamma$ is induced by $L$ rather than $L^V$. For some
positive function $b(t)$ and some functions $\alpha(t),\varphi(t)$,
define the parameter function:
$$ F=\Gamma(f)+b(t)\Gamma^Z(f)-\alpha(t)(f_t+V)-\varphi(t),
$$

it follows that
\begin{align*}
(L-\partial_t)F&= L\Gamma(f)-2\Gamma(f,f_t)+b(t)\left(L\Gamma^Z(f)-2\Gamma^Z(f,f_t)\right)-\alpha(t)(Lf_t-f_{tt})\\
&\hskip 12pt -\alpha(t)(LV-V_t)-b'(t)\Gamma^Z(f)+\alpha'(t)(f_t+V)+\varphi'(t).
\end{align*}

Applying \eqref{HJ2} and  assumption (H.2),  we have
\begin{align*}
(L-\partial_t)F&=2\Gamma_2(f)+2b(t)\Gamma_2^Z(f)-2\Gamma(f,F)-2(\alpha(t)-1)\Gamma(f,V)+2b(t)\Gamma^Z(f,V)\\
&\hskip 12pt -\alpha(t)LV-b'(t)\Gamma^Z(f)+\alpha'(t)f_t+\alpha'(t)V+\varphi'(t)
\end{align*}

Thanks to the  generalized curvature dimension inequality $CD(\rho_1,\rho_2,k,d)$,
\begin{align*}
(L-\partial_t) F&\ge-2\Gamma(f,F)+\frac{2}{d} (Lf)^2+\left(2\rho_1-\frac{2k}{b(t)}\right)\Gamma(f)+\left(2\rho_2-b'(t)\right)\Gamma^Z(f)\\
&\hskip 12pt -2(\alpha(t)-1)\Gamma(f,V)+2b(t)\Gamma^Z(f,V)-\alpha(t)LV+\alpha'(t) (V+f_t)+\varphi'(t)\\
&\ge -2\Gamma(f,F)-\frac{4\eta(t)}{d}Lf-\frac{2\eta^2(t)}{d}+\left(2\rho_1-\frac{2k}{b(t)}\right)\Gamma(f)+\left(2\rho_2-b'(t)\right)\Gamma^Z(f)\\
&\hskip 12pt -2(\alpha(t)-1)\Gamma(f,V)+2b(t)\Gamma^Z(f,V)-\alpha(t)LV+\alpha'(t) (V+f_t)+\varphi'(t),
\end{align*}
where $\eta=\eta(t)$ is some function to be determined later.  It follows by \eqref{HJ2},
\begin{align*}
(L-\partial_t) F&\ge -2\Gamma(f,F)+\left(\frac{4\eta}{d}+2\rho_1-\frac{2k}{b(t)}\right)\Gamma(f)
+(2\rho_2-b'(t))\Gamma^Z(f)-\left(\frac{4\eta}{d}-\alpha'(t)\right)(f_t+V)\nonumber\\
&\hskip 12pt -2(\alpha(t)-1)\Gamma(f,V)+2b(t)\Gamma^Z(f,V)-\alpha(t)LV-\frac{2\eta^2(t)}{d}+\varphi'(t).
\end{align*}

Applying \eqref{schd1} and the basic fact $2ax\le \frac{a^2}{\varepsilon}+\varepsilon x^2,\forall a,\varepsilon\in\mathbb{R}^+$, we have for any positive functions $\epsilon_1(t),\epsilon_2(t)$,
\begin{align*}
(L-\partial_t) F+2\Gamma(f,F)&\ge \left(\frac{4\eta}{d}+2\rho_1-\frac{2k}{b(t)}\right)\Gamma(f)
+\left(2\rho_2-b'(t)\right)\Gamma^Z(f)-\left(\frac{4\eta}{d}-\alpha'(t)\right)(f_t+V)\\
&\hskip12pt -2|\alpha(t)-1|\gamma_1(\Gamma(f))^{1/2}-2b(t)\gamma_2(\Gamma^Z(f))^{1/2}
-\alpha(t)\theta-\frac{2\eta^2(t)}{d}+\varphi'(t)\\
&\ge \left(\frac{4\eta}{d}+2\rho_1-\frac{2k}{b(t)}-\epsilon_1(t)\right)\Gamma(f)+
\left(2\rho_2-b'(t)-\epsilon_2(t)\right)\Gamma^Z(f)\\
&\hskip 12pt -\left(\frac{4\eta}{d}-\alpha'(t)\right)(f_t+V)-
\frac{|\alpha(t)-1|^2\gamma_1^2}{\epsilon_1(t)}-\frac{b^2(t)\gamma_2^2}{\epsilon_2(t)}
-\alpha(t)\theta-\frac{2\eta^2(t)}{d}+\varphi'(t).
\end{align*}

 For any $\epsilon_1,\epsilon_2\in (0,1)$,  Take
 \begin{eqnarray}
  \epsilon_1(t)&=&\frac{\epsilon_1 a'}{a}, \nonumber\\
  \epsilon_2(t)&=&2\epsilon_2\rho_2,\nonumber\\
  b(t)&=&\frac{2(1-\epsilon_2)\rho_2\int_0^ta(s)ds}{a(t)},\nonumber\\
    \eta(t)&=&\frac{d}{4}\left((1+\epsilon_1)\frac{a'}{a}+\frac{2k}{b(t)}-2\rho_1\right),\label{choice}\\
 \alpha(t)&=&\frac{4}{d\cdot a(t)}\int_0^ta(s)\eta(s)ds,\nonumber\\
  \varphi(t)&=& \frac{1}{a(t)}\int_0^ta(s)\left(\frac{\gamma_1^2a(s)|\alpha-1|^2}{\epsilon_1 a'(s)}+\frac{b^2\gamma_2^2}{2\epsilon_2\rho_2}+\theta\alpha(s)+\frac{2\eta^2(s)}{d}\right)ds.\nonumber
 \end{eqnarray}

 such that
  \begin{eqnarray*}
   \frac{4\eta}{d}+2\rho_1-\frac{2k}{b(t)}&=&(1+\epsilon_1)\frac{a'}{a},\nonumber\\
  2(1-\epsilon_2)\rho_2-b'(t)&=&\frac{a'b}{a},\nonumber\\
  \frac{4\eta}{d}-\alpha'(t)&=&\frac{a'\alpha}{a},\nonumber\\
  \varphi'(t)-\frac{|\alpha-1|^2\gamma_1^2}{\epsilon_1}-\frac{b^2\gamma_2^2}{\epsilon_2}-\alpha\theta-\frac{2\eta^2}{d}&=&-\varphi\frac{a'}{a}.\nonumber
  \end{eqnarray*}

With this choice, we have

\begin{lemma}
For the parameter function $F$, we have
\begin{align*}
(L-\partial_t) F+2\Gamma(f,F)&\ge\frac{a'}{a}F.
\end{align*}
\end{lemma}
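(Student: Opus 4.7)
My plan is to compute $(L - \partial_t)F$ directly from the definition of $F$, apply the generalized curvature--dimension inequality with $\nu = b(t)$, handle the potential cross-terms using Cauchy--Schwarz together with AM--GM, and then verify that the parameter choices in \eqref{choice} are engineered so that every surviving term matches the corresponding piece of $\frac{a'}{a}F$.

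First I would differentiate $F = \Gamma(f) + b(t)\Gamma^Z(f) - \alpha(t)(f_t + V) - \varphi(t)$ and invoke the Bochner identities $L\Gamma(f) - 2\Gamma(f, Lf) = 2\Gamma_2(f)$ and its $\Gamma^Z$ analogue. Writing the Schr\"odinger equation as $Lf + \Gamma(f) = f_t + V$, and using the commutation hypothesis $\Gamma(f, \Gamma^Z(f)) = \Gamma^Z(f, \Gamma(f))$, the cross-terms $-2\Gamma(f, f_t)$ and $-2b\Gamma^Z(f, f_t)$ rearrange to produce exactly $-2\Gamma(f, F)$ together with a controlled remainder involving $\Gamma(f, V), \Gamma^Z(f, V), \alpha'(f_t+V), b'\Gamma^Z(f), \alpha LV$ and $\varphi'$. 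I would then apply $CD(\rho_1, \rho_2, k, d)$ with $\nu = b(t)$ to bound $2\Gamma_2(f) + 2b\Gamma_2^Z(f)$ from below, linearize the resulting $\frac{2}{d}(Lf)^2$ via the elementary inequality $\frac{2}{d}(Lf)^2 \ge -\frac{4\eta}{d}Lf - \frac{2\eta^2}{d}$ valid for any auxiliary function $\eta(t)$, and substitute $Lf = (f_t+V) - \Gamma(f)$ to return to the variables present in $F$. The cross-terms $\Gamma(f, V), \Gamma^Z(f, V)$ are controlled by Cauchy--Schwarz using the standing assumption $\Gamma(V) \le \gamma_1^2, \Gamma^Z(V) \le \gamma_2^2$ and then absorbed into $\Gamma(f), \Gamma^Z(f)$ via $2ab \le a^2/\epsilon + \epsilon b^2$ with time-dependent weights $\epsilon_1(t), \epsilon_2(t)$.

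Finally, the choices in \eqref{choice} are made so that the coefficient of $\Gamma(f)$ reduces to $a'/a$, that of $\Gamma^Z(f)$ to $a'b/a$, that of $-(f_t+V)$ to $a'\alpha/a$, and the remaining purely time-dependent terms combine with $\varphi'$ to yield $-\varphi \cdot a'/a$. The last identity amounts to recognizing the integral formula for $\varphi$ as the unique solution with $\varphi(0) = 0$ of the linear ODE $\varphi' + (a'/a)\varphi = |\alpha-1|^2\gamma_1^2/\epsilon_1(t) + b^2\gamma_2^2/\epsilon_2(t) + \alpha\theta + 2\eta^2/d$. Summing the four matched pieces gives $\frac{a'}{a}F$ on the right. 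I expect the main obstacle to be purely algebraic bookkeeping: verifying the four matching identities simultaneously, and checking that the integrability hypotheses placed on $a$ just before Theorem \ref{thm-Schd} are precisely what make $\eta, \alpha, \varphi$ well-defined and finite as $t \to 0^+$ (which will matter in the subsequent application via the parabolic maximum principle).
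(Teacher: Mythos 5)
Your proposal is correct and follows essentially the same route as the paper: direct computation of $(L-\partial_t)F$, the curvature--dimension inequality with $\nu=b(t)$, linearization of $\frac{2}{d}(Lf)^2$ via the auxiliary function $\eta$, Cauchy--Schwarz absorption of the potential cross-terms with the weights $\epsilon_1(t),\epsilon_2(t)$, and the four coefficient-matching identities (including the first-order ODE characterization of $\varphi$) that produce $\frac{a'}{a}F$. Nothing essential is missing.
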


Now we are ready to prove Theorem \ref{thm-Schd}.
\begin{proof}[Proof of Theorem \ref{thm-Schd}]
Note that  \begin{align*}
(L-\partial_t)(a(t)uF)&=a(t)u\big((L-\partial_t)F+2\Gamma(f,F)\big)-a'uF(u)+a(t)F(L-\partial_t)u\\
&\ge a(t)u F V,
\end{align*}
it implies $$
(L^V-\partial_t)(a(t)uF)\ge 0.
$$
Notice that $a(0)=0$, the desired result follows by the maximum
principle, see \cite{Friedman,Bony}. \end{proof}

\section{Harnack inequalities}\label{sec-harnack}
In this section, we shall derive the Harnack inequalities for the positive solutions to the Schr\"odinger equaition \eqref{schd0}, which are the consequence of the Li-Yau type inequality \eqref{LY1}. Harnack inequality  is one of the main techniques in the regularity theory of PDEs over  the past several decades.

To this end, we introduce the metric associated to Schr\"odinger operator $L^V$,  see \cite{LY} for elliptic operators on Riemannian manifolds. For $\delta >1$, denote
$$
\rho_\delta(x,y,t)=\inf_{\gamma\in\Gamma_0}\left\{\frac{\delta}{4t}\int_0^1\sum_{i=1}^ma_i^2(s)ds
+t\int_0^1V(\gamma(s),(1-s)t_2+st_1)ds\right\}
$$
where $\Gamma_0$ is a set of admissible curves $\gamma: [0,1]\to M$  satisfying $\gamma(0)=x,
\gamma(1)=y$, and $\gamma'(s)=\sum_{i=1}^ma_i(s)X_i(\gamma(s))$ a. e. s. Since $\sum_ia_i^2(s)$ does not depend on the choice of the $X_j$'s (see section 2 in \cite{Jerison}), nor does the distance function $\rho_\delta$. In the case of $V\equiv0$, $\rho_\delta$ is nothing less than the Carnot-Carath\'eodory distance $d_{CC}$ induced by the subelliptic operator $L$.

\begin{theorem}\label{harnack1} Let $u$ be a positive solution to the
Schr\"odinger equation \eqref{schd0}. There exit positive constants $C_i',i=1,\cdots,6$ and $\delta_0=\delta_0(\rho_2,k)>1$,  we have for all $0<t_1<t_2$,  $x,y\in M$ and $\delta>\delta_0$,
$$
u(x,t_1)\le u(y,
t_2)\left(\frac{t_2}{t_1}\right)^{\frac{C_1'}{\delta}}\exp\left(\rho_\delta(x,y,t_2-t_1)+\sum_{i=1}^5\frac{C_{i+1}'}{i}\left(t_2^{i}-t_1^{i}\right)\right).
$$

\end{theorem}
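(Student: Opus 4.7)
The strategy is the classical Li--Yau device of integrating a differential Harnack inequality along a space--time curve, adapted here to the subelliptic operator with potential. Applying Corollary \ref{cor0} to $f=\log u$ and discarding the nonnegative term $b(t)\Gamma^Z(f)$ yields
$$f_t\ge\frac{\Gamma(f)}{\alpha(t)}-V-\frac{\varphi(t)}{\alpha(t)}.$$
Fix $0<t_1<t_2$ and $x,y\in M$, and for an admissible curve $\gamma\colon[0,1]\to M$ with $\gamma(0)=y$, $\gamma(1)=x$, and $\gamma'(s)=\sum_{i=1}^m a_i(s)X_i(\gamma(s))$, set $\tau(s)=(1-s)t_2+st_1$ and $\ell(s)=f(\gamma(s),\tau(s))$. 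The chain rule gives $\ell'(s)=\sum_i a_i(s)X_i f-(t_2-t_1)f_t$.

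The analytic heart of the argument is Cauchy--Schwarz plus Young's inequality. Bounding $\sum_i a_i X_i f\le|\dot\gamma|\,\Gamma(f)^{1/2}$ with $|\dot\gamma|^2=\sum_i a_i^2$, and then using $\sqrt{AB}\le\tfrac{\alpha(\tau)}{4(t_2-t_1)}A+\tfrac{t_2-t_1}{\alpha(\tau)}B$ with $A=|\dot\gamma|^2$ and $B=\Gamma(f)$, the $\Gamma(f)/\alpha(\tau)$ contribution cancels exactly against the one coming from the differential Harnack lower bound on $f_t$. What remains is the pointwise estimate
$$\ell'(s)\le\frac{\alpha(\tau)|\dot\gamma|^2}{4(t_2-t_1)}+(t_2-t_1)V(\gamma(s),\tau(s))+\frac{(t_2-t_1)\varphi(\tau)}{\alpha(\tau)}.$$
Integrating on $[0,1]$ and performing the change of variables $\tau=(1-s)t_2+st_1$ in the last term gives
$$f(x,t_1)-f(y,t_2)\le\int_0^1\frac{\alpha(\tau(s))|\dot\gamma|^2}{4(t_2-t_1)}\,ds+(t_2-t_1)\int_0^1 V\,ds+\int_{t_1}^{t_2}\frac{\varphi(\tau)}{\alpha(\tau)}\,d\tau.$$

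To match the definition of $\rho_\delta$, the parameters $\gamma,\varepsilon_1,\varepsilon_2$ in Corollary \ref{cor0} are calibrated so that $\alpha(\tau)\le\delta$ uniformly on $[t_1,t_2]$; this is feasible precisely when $\delta$ exceeds the algebraic threshold $\delta_0$ given by the infimum of $\alpha$ over admissible parameter choices, which depends only on $\rho_2$ and $k$. Taking the infimum over admissible paths then converts the first two integrals into $\rho_\delta(x,y,t_2-t_1)$. The residual integral $\int_{t_1}^{t_2}\varphi(\tau)/\alpha(\tau)\,d\tau$ is evaluated using the explicit form of $\varphi$ from Corollary \ref{cor0}---a $1/\tau$ singularity plus a polynomial of degree at most $4$ in $\tau$---producing $(C_1'/\delta)\log(t_2/t_1)+\sum_{i=1}^5(C_{i+1}'/i)(t_2^i-t_1^i)$ once the structural constants $\rho_1,\rho_2,k,d,\gamma_1,\gamma_2,\theta$ are absorbed into the $C_i'$. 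Exponentiating yields the theorem. The main obstacle is this final calibration: one must simultaneously arrange $\alpha(\tau)\le\delta$ on $[t_1,t_2]$ and extract the precise coefficient $C_1'/\delta$ of the $\log(t_2/t_1)$ term (requiring $\alpha$ to be comparable to $\delta$ also from below), a delicate but essentially algebraic bookkeeping given the explicit formulas for $\alpha$ and $\varphi$.
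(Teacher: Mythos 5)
Your proposal follows essentially the same route as the paper's proof: integrate $\frac{d}{ds}\log u$ along the space--time path $s\mapsto(\gamma(s),(1-s)t_2+st_1)$, use the differential Harnack inequality of Corollary \ref{cor0} to bound $-(t_2-t_1)(\log u)_t$ from above, absorb the gradient term via Young's inequality to produce $\frac{\delta}{4(t_2-t_1)}\sum_i a_i^2$, integrate the $1/\tau$ and polynomial correction terms in time, and take the infimum over admissible curves to recover $\rho_\delta$. The calibration issue you flag at the end (getting the exponent $C_1'/\delta$ with $C_1'$ independent of $\delta$ requires $\alpha$ to be comparable to $\delta$ from below) is a genuine subtlety that the paper itself glosses over, so your awareness of it is a point in your favor rather than a gap.
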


\begin{proof} Let $\gamma$ be any admissible curve given by $\gamma: \
[0,1]\to M$, with $\gamma(0)=y$ and $\gamma(1)=x$. We define $\eta:\
[0,1]\to M\times [t_1,t_2]$ by $\eta(s)=(\gamma(s),(1-s)t_2+st_1)$,
clear that $\eta(0)=(y, t_2)$ and $\eta(1)=(x,t_1)$. Integrating
$\frac{d}{ds}\log u$ along $\eta $, we get
$$
\aligned \log u(x,t_1)-\log u(u, t_2)&=\int_0^1\frac{d}{ds}\log
uds\\
&=\int_0^1 \sum_{i=1}^ma_iX_i\log u(x,s)-(t_2-t_1)(\log
u)_s ds
\endaligned
$$

Applying \eqref{cor0-dh} in Corollary \ref{cor0}, there exist some constants $C_i',i=1,\cdots,6$ and $\delta_0=\delta_0(\rho_2,k)>1$ such that for $\delta>\delta_0$, $t=(1-s)t_2+st_1$, we have
$$
\aligned \log \frac{u(x,t_1)}{u(x,t_2)} &\le \int_0^1 \sum_{i=1}^m|a_i||X_i\log u|
+(t_2-t_1)\left(-\frac{1}{\delta}\Gamma(\log u)+ \sum_{i=1}^6C_i't^{i-2}+V(\gamma(s),t) \right) ds\\
&\le \int_0^1   \sum_{i=1}^m\left(|a_i(s)||X_i\log
u|-\frac{t_2-t_1}{\delta}|X_i\log u|^2\right)+(t_2-t_1)V(\gamma(s),t) ds \\
&\hskip
12pt+C_2'(t_2-t_1)+C_1'\log \frac{t_2}{t_1}+\sum_{i=1}^4\frac{C_{i+2}'}{i+1}\left(t_2^{i+1}-t_1^{i+1}\right)\\
&\le\int_0^1 \frac{\delta}{4(t_2-t_1)}\sum_{i=1}^ma_i^2(s)+(t_2-t_1)V(\gamma(s),(1-s)t_2+st_1) ds
\\
&\hskip
12pt+C_1'\log \frac{t_2}{t_1}+\sum_{i=1}^5\frac{C_{i+1}'}{i}\left(t_2^{i}-t_1^{i}\right),
\endaligned
$$
taking exponentials of the above inequality gives the desired
result.
\end{proof}

Applying Corollary \ref{cor11} in the above proof, we have
\begin{theorem}\label{harnack2} Let $u$ be a positive solution to the Schr\"odinger equation $L^Vu=0$.  There exits positive constant $C=C(\theta,\gamma_1,\gamma_2,\rho_1,\rho_2,k,d)$, for any
 $x,y\in M$, we have
$$
u(x)\le u(y)\exp{(Cd(x,y))},
$$
where $d(x,y)=\inf_{\gamma\in\Gamma_0}\left\{\int_0^1\sum_{i=1}^ma_i^2(s)ds
+\int_0^1V(\gamma(s))ds\right\}$.

\end{theorem}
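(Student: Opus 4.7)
The plan is to adapt the line-integral argument of Theorem \ref{harnack1} to the stationary setting, replacing the parabolic gradient estimate by the elliptic one from Corollary \ref{cor11}. Since $V$ is independent of $t$ and $L^Vu=0$, the positive solution $u$ is time-independent, and Corollary \ref{cor11} specializes to
$$
\Gamma(\log u) \le \left(2+\frac{k}{\rho_2}\right)V + C_0,
$$
where $C_0=C_0(\theta,\gamma_1,\gamma_2,\rho_1,\rho_2,k,d)$, after dropping the nonnegative term $\frac{\rho_2}{4|\rho_1|}\Gamma^Z(\log u)$ on the left-hand side.

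Next, fix $x,y\in M$ and let $\gamma:[0,1]\to M$ be any admissible curve with $\gamma(0)=y$, $\gamma(1)=x$ and $\gamma'(s)=\sum_{i=1}^m a_i(s)X_i(\gamma(s))$ a.e. By the fundamental theorem of calculus,
$$
\log u(x)-\log u(y) = \int_0^1 \frac{d}{ds}\log u(\gamma(s))\,ds = \int_0^1 \sum_{i=1}^m a_i(s)\,X_i\log u(\gamma(s))\,ds.
$$
A pointwise Young's inequality of the form $\sum_i a_iX_i\log u \le \lambda \sum_i a_i^2 + \frac{1}{4\lambda}\Gamma(\log u)$ for a parameter $\lambda>0$, combined with the gradient estimate above, yields
$$
\log u(x)-\log u(y) \le \lambda\int_0^1\sum_i a_i^2\,ds + \frac{1}{4\lambda}\left(2+\tfrac{k}{\rho_2}\right)\int_0^1 V(\gamma(s))\,ds + \frac{C_0}{4\lambda}.
$$
Choosing $\lambda$ to balance the first two coefficients (for instance $\lambda=\tfrac12\sqrt{2+k/\rho_2}$) produces constants $C_1, C_2$ depending only on the listed parameters such that
$$
\log u(x)-\log u(y) \le C_1\!\left(\int_0^1\sum_i a_i^2\,ds + \int_0^1 V(\gamma(s))\,ds\right) + C_2.
$$
Taking the infimum over admissible curves and exponentiating then gives the claimed bound with $C=\max(C_1,C_2)$, after absorbing the harmless additive constant.

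The main obstacle is bookkeeping in the two-term Young splitting so as to produce a single constant $C$ multiplying the composite distance $d(x,y)$ in the exponent rather than separate coefficients in front of $\int\sum a_i^2$ and $\int V$; the key is that both of these terms are controlled by $d(x,y)$ once the infimum is taken, so any constant weighting in the intermediate step collapses into the universal constant $C$. A minor subtlety is verifying that $u$ being a solution of the stationary equation does satisfy the hypotheses of Corollary \ref{cor11} (viewed as the time-independent case of Theorem \ref{thm-Schd}); this is immediate since $u$ solves $\partial_t u = L^Vu$ trivially, so all of the semigroup/maximum-principle machinery applies verbatim.
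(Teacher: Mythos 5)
Your proposal follows essentially the same route as the paper, whose proof of this theorem consists precisely of rerunning the line-integral argument of Theorem \ref{harnack1} with the elliptic gradient estimate of Corollary \ref{cor11} in place of the parabolic one. The only caveat is the final ``absorbing the additive constant'' step: the term $C_0/(4\lambda)$ is independent of $x,y$ and is not controlled by $d(x,y)$ (which can vanish), so strictly one obtains $u(x)\le u(y)\exp(C_1 d(x,y)+C_2)$; this imprecision is inherited from the paper's own statement rather than introduced by your argument.
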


\section{Perelman type Entropy and its monotonicity}\label{entropy}

Throughout this section, we assume that the operator $L$ satisfies the generalized curvature-dimension inequality $CD(\rho_1,\rho_2, k,d)$ for some $\rho_1\ge0$, $\rho_2>0$, $k\ge0$ and $d<\infty$. Consequently, the invariant measure is finite, see \cite{BB2} Theorem 1.3.  Without loss any generality, we suppose $\mu$ is a probability measure.
Let $u(x,t)$ be the positive solution of the heat equation $\partial_t u=L u$ on $M\times [0,\infty)$, we can assume $\int_Mud\mu=1$. Denote $g(x,t)$ the function
\begin{equation}\label{s1}
u(x,t)=\frac{e^{-g(x,t)}}{(4\pi t)^{\frac{\tau D}{2}}}
\end{equation}
with $D=d\left(1+\frac{3k}{2\rho_2}\right)^2$, where $\tau$ is some positive constant determined later. We define the so-called  Nash-type entropy, see \cite{Na,Ni},
\begin{equation*}\label{nash1}
N(u,t)=-\int_M u\log ud\mu
\end{equation*}
and
\begin{equation}\label{nash2}
\widetilde{N}(u,t)=N(u,t)-\frac{\tau D}{2}(\log (4\pi t)+1).
\end{equation}
By applying the Li-Yau type inequality established in section 2, we have the monotonicity of the Nash entropy formulae:
\begin{theorem}\label{mono}
For all $t>0$ and $\tau\ge1$, we have
\begin{equation*}\label{eq-mono}
\frac{d}{dt}\widetilde{N}(u,t)=\int_Mu\left(\Gamma(g)+\left(1+\frac{3k}{2\rho_2}\right)
g_t+\frac{3k\tau D}{4\rho_2t}\right)d\mu\le 0.
\end{equation*}
\end{theorem}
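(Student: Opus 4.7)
The plan is to compute $\widetilde N'(u,t)$ directly by differentiation and integration by parts, rewrite the resulting expression in terms of $g$ and $g_t$ to establish the stated equality, and then deduce nonpositivity from the Li-Yau bound \eqref{gradient1-100}.

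First I would differentiate $N(u,t)=-\int_Mu\log u\,d\mu$. Using $\partial_tu=Lu$, the invariance identity $\int_MLu\,d\mu=\int_MuL1\,d\mu=0$ (from $L1=0$ and symmetry of $L$), and the integration by parts $\int_M\Gamma(f,g)\,d\mu=-\int_MfLg\,d\mu$, one finds
$$
N'(u,t)=-\int_M(Lu)(1+\log u)\,d\mu=-\int_M(\log u)Lu\,d\mu=\int_M\Gamma(u,\log u)\,d\mu=\int_Mu\,\Gamma(\log u)\,d\mu,
$$
since $\Gamma(u,\log u)=\Gamma(u)/u=u\,\Gamma(\log u)$. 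Subtracting the derivative of the correction term in \eqref{nash2} yields $\widetilde N'(u,t)=\int_Mu\,\Gamma(\log u)\,d\mu-\frac{\tau D}{2t}$.

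Next I would translate to $g$. From $\log u=-g-\frac{\tau D}{2}\log(4\pi t)$, the carr\'e du champ annihilates the time-dependent additive constant, so $\Gamma(\log u)=\Gamma(g)$; differentiating in $t$ gives $g_t=-Lu/u-\tau D/(2t)$, whence $\int_Mu\,g_t\,d\mu=-\frac{\tau D}{2t}$ (using $\int_MLu\,d\mu=0$ again). Substituting these into the bracketed expression in the statement,
$$
\int_Mu\Bigl(\Gamma(g)+\Bigl(1+\tfrac{3k}{2\rho_2}\Bigr)g_t+\tfrac{3k\tau D}{4\rho_2t}\Bigr)\,d\mu=\int_Mu\,\Gamma(\log u)\,d\mu-\tfrac{\tau D}{2t},
$$
where the $\tfrac{3k}{2\rho_2}\cdot\tfrac{\tau D}{2t}$ and $\tfrac{3k\tau D}{4\rho_2t}$ contributions cancel. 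This matches $\widetilde N'(u,t)$ and establishes the equality.

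For nonpositivity I would invoke \eqref{gradient1-100} (the $\gamma=2$, $\rho_1\ge 0$ reduction of Corollary \ref{cor10}): $\Gamma(\log u)+\tfrac{2\rho_2t}{3}\Gamma^Z(\log u)\le(1+\tfrac{3k}{2\rho_2})(\log u)_t+\tfrac{D}{2t}$. Multiplying by $u$ and integrating, the pointwise identity $u(\log u)_t=u_t$ together with $\int_Mu_t\,d\mu=\frac{d}{dt}\int_Mu\,d\mu=0$ annihilates the $(\log u)_t$ term, and dropping the nonnegative $\tfrac{2\rho_2t}{3}\int_Mu\,\Gamma^Z(\log u)\,d\mu$ on the left leaves $\int_Mu\,\Gamma(\log u)\,d\mu\le\tfrac{D}{2t}\le\tfrac{\tau D}{2t}$ for $\tau\ge 1$, whence $\widetilde N'(u,t)\le 0$. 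The only real point of care is the choice of Li-Yau form: \eqref{gradient1-100} (which requires $\rho_1\ge 0$) is precisely what removes the $\rho_1$-dependent terms present in \eqref{gradient1} that would otherwise obstruct the clean bound by $\tau D/(2t)$, so the hypothesis $\rho_1\ge 0$ is used in an essential way.
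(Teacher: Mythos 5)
Your proposal is correct and follows essentially the same route as the paper: compute $\widetilde N'(u,t)=\int_Mu\,\Gamma(\log u)\,d\mu-\tfrac{\tau D}{2t}$ by integration by parts, use $\Gamma(\log u)=\Gamma(g)$ and $\int_Mu\,g_t\,d\mu=-\tfrac{\tau D}{2t}$ to get the stated equality, and conclude from the Li-Yau estimate \eqref{gradient1-100}. The only (immaterial) difference is that the paper applies \eqref{gradient1-100} pointwise to show the integrand itself is nonpositive for $\tau\ge1$, whereas you integrate the estimate first and kill the $(\log u)_t$ term by mass conservation; both hinge on the same inequality and the same hypothesis $\rho_1\ge0$.
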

\begin{proof}

Denote $f=\log u$ as  above, we have $f=-g-\frac{\tau D}{2}\log (4\pi t)$, it yields
\begin{equation}\label{id1}
f_t=-g_t-\frac{\tau D}{2t}, \ \Gamma(f)=\Gamma(g).
\end{equation}

Thanks to the fact $\int_Mud\mu=1$, the integration by parts formula gives
$$
\frac{d}{dt}\widetilde{N}(u,t)=\int_M u\Gamma(\log u)d\mu-\frac{\tau D}{2t}=\int_M u\Gamma(g)d\mu-\frac{\tau D}{2t},
$$
\begin{equation*}\label{id3}
\int_M ug_t d\mu=-\frac{\tau D}{2t}-\int_M uf_t d\mu=-\frac{\tau D}{2t}-\int_M u_td\mu=-\frac{\tau D}{2t}.
\end{equation*}

It follows
\begin{align*}
\frac{d}{dt}\widetilde{N}(u,t)&=\int_Mu\left(\Gamma(g)\right)d\mu+\left(1+\frac{3k}{2\rho_2}\right)
\left(\int_Mu g_td\mu+\frac{\tau D}{t}\right)-\frac{\tau D}{2t}\nonumber\\
&=\int_Mu\left(\Gamma(g)+\left(1+\frac{3k}{2\rho_2}\right)g_t+\frac{3k\tau D}{4\rho_2t}\right)d\mu
\end{align*}
By \eqref{gradient1-100} and \eqref{id1}, we have

\begin{equation*}\label{gradient1-1}
\Gamma(g)+\left(1+\frac{ 3k}{2\rho_2}\right)g_t+\left(1+\frac{ 3k}{2\rho_2}\right)\frac{\tau D}{2t}-\frac{D}{2t}\le0.
\end{equation*}
For $\tau\ge1$, clearly we have
$$
\frac{d}{dt}\widetilde{N}(u,t)\le 0.
$$
 The desired conclusion follows.\end{proof}

Following Perelman \cite{Pl}, we define
\begin{equation}\label{perel1}
\mathcal{W}(u,t)=\int_Mu\left(t\Gamma(g)+g-\tau D\right)d\mu=\frac{d}{dt}\left(t\widetilde{N}(u,t)\right),
\end{equation}
and
\begin{equation}\label{perel2}
\mathcal{W}_{\varsigma}(u,t)=\mathcal{W}(u,t)+\varsigma t^2\int_Mu\Gamma^Z(\log u)d\mu,
\end{equation}
where $\varsigma$ is some positive constant to be determined.

To study the monotonicity of the functional $\mathcal{W}_{\varsigma}$, let us give the following useful lemma.
\begin{lemma}\label{g20} For any positive solution to the heat equation $\partial_tu=Lu$, we have
\begin{equation*}
\frac{d^2}{dt^2}N(u,t)=-2\int_Mu\Gamma_2(\log u)d\mu,
\end{equation*}
this gives
\begin{equation*}\label{g21}
\frac{d}{dt}\mathcal{W}(u,t)=2\int_M u\Gamma(\log u)d\mu-2t\int_Mu\Gamma_2(\log u)d\mu-\frac{\tau D}{2t}.
\end{equation*}
Moreover, denote $\mathcal{B}(u,t)=\int_Mu\Gamma^Z(\log u)d\mu$, we have
\begin{equation*}\label{g22}
\mathcal{B}'(u,t)=-2\int_Mu\Gamma_2^Z(\log u)d\mu
\end{equation*}
\end{lemma}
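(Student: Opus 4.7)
The plan is to prove the three identities in order; the second follows algebraically from the first, while the third parallels the proof of the first but requires the commutativity Assumption on $\Gamma$ and $\Gamma^Z$ in an essential way.

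For the first identity, I first compute $N'(u,t)$ by integration by parts: using the chain-rule identity $\Gamma(u,\log u)=u\Gamma(\log u)$ (valid because $\Gamma$ is a genuine carr\'e du champ), symmetry of $L$, and $\int Lu\,d\mu=0$, I obtain $N'(u,t)=\int u\Gamma(\log u)\,d\mu$. Differentiating once more and using $u_t=Lu$ yields
\[
N''(u,t)=\int Lu\cdot\Gamma(\log u)\,d\mu+2\int u\Gamma(\log u,(\log u)_t)\,d\mu.
\]
For the second term I would use $u\Gamma(\log u,g)=\Gamma(u,g)$ and integrate by parts, then apply the chain rule $L\log u=Lu/u-\Gamma(\log u)$, to arrive at $N''(u,t)=\int Lu\cdot\Gamma(\log u)\,d\mu-2\int (Lu)^2/u\,d\mu$. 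Independently, expanding $-2\int u\Gamma_2(\log u)\,d\mu$ via $\Gamma_2(f)=\tfrac12 L\Gamma(f)-\Gamma(f,Lf)$, using symmetry of $L$ and the same chain rule for $L\log u$, produces exactly the same right-hand side, which proves (1).

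The second identity is then purely algebraic. Since $\widetilde N-N=-\tfrac{\tau D}{2}(\log(4\pi t)+1)$, direct differentiation gives $\mathcal{W}'=\frac{d^2}{dt^2}(t\widetilde N)=2\widetilde N'+t\widetilde N''=2N'+tN''-\tfrac{\tau D}{2t}$, into which the formula $N'=\int u\Gamma(\log u)\,d\mu$ and identity (1) are plugged directly.

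The main obstacle is the third identity: $\Gamma^Z$ is not the carr\'e du champ of $L$, so there is no direct integration-by-parts formula linking $\int\Gamma^Z(\phi,\psi)\,d\mu$ to $\int\phi L\psi\,d\mu$. My plan is nonetheless to mirror the argument for (1). Setting $f=\log u$, direct differentiation gives
\[
\mathcal{B}'(u,t)=\int Lu\cdot\Gamma^Z(f)\,d\mu+2\int u\Gamma^Z(f,f_t)\,d\mu.
\]
Symmetry of $L$ combined with $L\Gamma^Z(f)=2\Gamma_2^Z(f)+2\Gamma^Z(f,Lf)$ and $f_t=Lf+\Gamma(f)$ reduces the target to proving
\[
\int Lu\cdot\Gamma^Z(f)\,d\mu+\int u\Gamma^Z(f,\Gamma(f))\,d\mu=0.
\]
This is exactly where the standing Assumption enters: integrating by parts with $L$ gives $\int Lu\cdot\Gamma^Z(f)\,d\mu=-\int\Gamma(u,\Gamma^Z(f))\,d\mu=-\int u\Gamma(f,\Gamma^Z(f))\,d\mu$ (again by the chain rule for $\Gamma$), and the hypothesis $\Gamma(f,\Gamma^Z(f))=\Gamma^Z(f,\Gamma(f))$ then converts this into the required $-\int u\Gamma^Z(f,\Gamma(f))\,d\mu$, producing the cancellation and hence (3).
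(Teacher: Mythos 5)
Your proposal is correct and follows essentially the same route as the paper: integration by parts against the symmetric operator $L$, the chain rule $\Gamma(u,g)=u\Gamma(\log u,g)$, the identity $L\log u=Lu/u-\Gamma(\log u)$, and — for the $\Gamma^Z$ identity — the standing Assumption $\Gamma(f,\Gamma^Z(f))=\Gamma^Z(f,\Gamma(f))$ used exactly where the paper uses it. The only cosmetic difference is in the first identity, where you expand both $N''$ and $-2\int_M u\Gamma_2(\log u)\,d\mu$ to the common form $\int_M Lu\,\Gamma(\log u)\,d\mu-2\int_M (Lu)^2/u\,d\mu$, while the paper transforms $N''$ directly into $-2\int_M u\Gamma_2(\log u)\,d\mu$ via the commutation $(\partial_t-L)L\log u=L\Gamma(\log u)$; the computations are equivalent.
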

\begin{proof}Compute
$$
\aligned
\frac{d^2}{dt^2} {N}(u,t)&=\frac{d^2}{dt^2} N(u,t)\\
&=-\frac{d}{dt}\int_M uL\log ud\mu \\
&=-\int_M LuL\log ud\mu-\int_Mu\partial_t(L\log u)d\mu \\
&= -2\int_M LuL\log ud\mu-\int_Mu(\partial_t-L)(L\log u)d\mu .
\endaligned
$$
Notice that
$$
(\partial_t-L)L\log u=L(\partial_t-L)\log u=L\Gamma(\log u),
$$
this yields
\begin{align}
\frac{d^2}{dt^2} {N}(u,t)&=-\int_MuL\Gamma(\log u)d\mu+2\int_M u\Gamma(\log u,L\log u)d\mu \nonumber\\
&=-2\int_Mu\Gamma_2(\log u)d\mu.\nonumber
\end{align}
Hence
\begin{align*}
\frac{d}{dt}\mathcal{W}(u,t)&=\frac{d^2}{dt^2}\left(t\widetilde{N}(u,t)\right)\\
&=tN''(u,t)+2N'(u,t)-(t\tau D(1+\log(4\pi t))''\\
&=-2t\int_Mu\Gamma_2(\log u)d\mu+2\int_Mu\Gamma(\log u)d\mu-\frac{\tau D}{2t}.
\end{align*}
Meanwhile
$$\aligned
B'(u,t)&=\int_M\partial_tu\Gamma^Z(\log u)d\mu+2\int_Mu\Gamma(\log u,\partial_t\log u)d\mu\\
&=\int_MLu \Gamma^Z(\log u)d\mu+2\int_Mu\Gamma^Z\Big(\log u,L\log u+\Gamma(\log u)\Big)d\mu\\
&=\int_MuL\Gamma^Z(\log u)d\mu+2\int_Mu\Gamma^Z\Big(\log u,L\log u\Big)d\mu+2\int_Mu\Gamma^Z\Big(\log u,\Gamma(\log u)\Big)d\mu\\
&=\int_MuL\Gamma^Z(\log u)d\mu+2\int_Mu\Gamma^Z\Big(\log u,L\log u\Big)d\mu+2\int_Mu\Gamma\Big(\log u,\Gamma^Z(\log u)\Big)d\mu,
\endaligned$$
where in the last equality we apply the assumption (H. 2).
Notice that $$
\int_MuL\Gamma^Z(\log u)d\mu=-\int_M\Gamma(u,\Gamma^Z(\log u))d\mu=-\int_Mu\Gamma(\log u,\Gamma^Z(\log u))d\mu,
$$
it follows
\begin{align}
 B'(u,t)&=-\int_MuL\Gamma^Z(\log u)d\mu+2\int_Mu\Gamma^Z\Big(\log u,L\log u\Big)d\mu\nonumber\\
&=-2\int_Mu\Gamma_2^Z(\log u)d\mu.\nonumber
\end{align}
Hence we complete the proof.\end{proof}

Now we are ready to state the main result in this section.
\begin{theorem}\label{thm-perel}
For $\rho_2\le \varsigma\le \frac{5}{3}\rho_2$ and $\tau\ge 2+\frac{2k}{\rho_2}$, we have
\begin{align*}
\frac{d}{dt}\mathcal{W}_{\varsigma}(u,t)\le -\frac{2t}{d}\int_Mu(L\log u)^2d\mu+\frac{D}{2t}\left(\frac{2(k+\varsigma)}{\varsigma}-\tau\right)\le 0
\end{align*}
holds for all $t>0$.
\end{theorem}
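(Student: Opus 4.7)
The plan is to differentiate $\mathcal{W}_\varsigma$ term by term using Lemma~\ref{g20}, then invoke the generalized curvature-dimension inequality $CD(\rho_1,\rho_2,k,d)$ with a carefully chosen parameter $\nu$, and finally dispose of the leftover gradient terms via the Li--Yau inequality \eqref{gradient1-100} and the mass conservation $\int_M u_t\,d\mu = 0$.

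First I would write
\[
\tfrac{d}{dt}\mathcal{W}_\varsigma(u,t)=\tfrac{d}{dt}\mathcal{W}(u,t)+2\varsigma t\,\mathcal{B}(u,t)+\varsigma t^2\,\mathcal{B}'(u,t),
\]
and apply Lemma~\ref{g20} to get
\[
\tfrac{d}{dt}\mathcal{W}_\varsigma=2\!\int_M\! u\Gamma(\log u)\,d\mu -2t\!\int_M\! u\bigl(\Gamma_2(\log u)+\varsigma t\,\Gamma_2^Z(\log u)\bigr)d\mu+2\varsigma t\!\int_M\! u\Gamma^Z(\log u)\,d\mu-\tfrac{\tau D}{2t}.
\]
Next, applying $CD(\rho_1,\rho_2,k,d)$ to $f=\log u$ with the choice $\nu=\varsigma t$ gives
\[
\Gamma_2(\log u)+\varsigma t\,\Gamma_2^Z(\log u)\ge \tfrac1d(L\log u)^2+\Bigl(\rho_1-\tfrac{k}{\varsigma t}\Bigr)\Gamma(\log u)+\rho_2\Gamma^Z(\log u).
\]
Substituting, the $\Gamma_2$--terms produce exactly the target $-\frac{2t}{d}\int u(L\log u)^2 d\mu$, together with $-2t\rho_1\int u\Gamma(\log u)\,d\mu$ (which is $\le 0$ since $\rho_1\ge 0$, so we discard it), a residual $(2+\frac{2k}{\varsigma})\int u\Gamma(\log u)\,d\mu$, and a residual $2t(\varsigma-\rho_2)\int u\Gamma^Z(\log u)\,d\mu$.

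The main obstacle is to bound these two residual gradient integrals using the Li--Yau inequality \eqref{gradient1-100} (taking $\gamma=2$),
\[
\Gamma(\log u)+\tfrac{2\rho_2}{3}t\,\Gamma^Z(\log u)\le \Bigl(1+\tfrac{3k}{2\rho_2}\Bigr)(\log u)_t+\tfrac{D}{2t}.
\]
The idea is to write the residual combination as $A\,\Gamma(\log u)+Bt\,\Gamma^Z(\log u)$ with $A=2+\frac{2k}{\varsigma}$, $B=2(\varsigma-\rho_2)$, and to majorize it by $A\bigl(\Gamma(\log u)+\frac{2\rho_2}{3}t\,\Gamma^Z(\log u)\bigr)$. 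This majorization requires $B\le \frac{2\rho_2}{3}A$, which after simplification becomes $3\varsigma^2-5\rho_2\varsigma-2\rho_2k\le 0$; this quadratic is non-positive precisely on $[\varsigma_-,\varsigma_+]$ with $\varsigma_+=\frac{5\rho_2+\sqrt{25\rho_2^2+24\rho_2 k}}{6}\ge \frac{5\rho_2}{3}$, so the hypothesis $\rho_2\le\varsigma\le\frac{5}{3}\rho_2$ (which also gives $A,B\ge 0$) is exactly what makes this step work.

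Finally, integrating the Li--Yau inequality against $u\,d\mu$ and using $\int_M u\,(\log u)_t\,d\mu=\int_M u_t\,d\mu=\frac{d}{dt}\int_M u\,d\mu=0$, the $(\log u)_t$ term drops out, yielding
\[
\int_M u\bigl(A\,\Gamma(\log u)+Bt\,\Gamma^Z(\log u)\bigr)d\mu\le A\cdot\tfrac{D}{2t}=\Bigl(2+\tfrac{2k}{\varsigma}\Bigr)\tfrac{D}{2t}.
\]
Combining, the first claimed bound $\frac{d}{dt}\mathcal{W}_\varsigma\le -\frac{2t}{d}\int u(L\log u)^2 d\mu+\frac{D}{2t}\bigl(\frac{2(k+\varsigma)}{\varsigma}-\tau\bigr)$ follows; the hypothesis $\tau\ge 2+\frac{2k}{\rho_2}\ge 2+\frac{2k}{\varsigma}$ then forces the second factor to be non-positive, giving $\frac{d}{dt}\mathcal{W}_\varsigma\le 0$.
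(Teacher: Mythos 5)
Your proposal is correct and follows essentially the same route as the paper: differentiate via Lemma \ref{g20}, apply $CD(\rho_1,\rho_2,k,d)$ with $\nu=\varsigma t$, discard the $-2t\rho_1\int u\Gamma(\log u)\,d\mu$ term using $\rho_1\ge 0$, and absorb the residual $\frac{2(k+\varsigma)}{\varsigma}\Gamma+2t(\varsigma-\rho_2)\Gamma^Z$ into the Li--Yau inequality \eqref{gradient1-100} together with $\int_M u_t\,d\mu=0$. Your explicit verification that $\varsigma\le\frac{5}{3}\rho_2$ guarantees $2(\varsigma-\rho_2)\le\frac{2\rho_2}{3}\cdot\frac{2(k+\varsigma)}{\varsigma}$ is exactly the (unstated) computation behind the paper's majorization step, and you correctly have $\Gamma^Z$ where the paper's displayed formula for $2\varsigma t\mathcal{B}(u,t)$ contains a typographical $\Gamma$.
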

\begin{proof}
By Lemma \ref{g20}, we have
\begin{align*}
\frac{d}{dt}\mathcal{W}_{\varsigma}(u,t)&=\frac{d}{dt}\mathcal{W}(u,t)+2\varsigma tB(t)+\varsigma t^2\frac{d}{dt}\mathcal{B}(t)\nonumber\\
&=-2t\int_Mu\Gamma_2(\log u)d\mu-2\varsigma t^2\int_Mu\Gamma_2^Z(\log u)d\mu+2\int_M u\Gamma(\log u)d\mu\nonumber\\
&\hskip 12pt +2\varsigma t\int_Mu\Gamma(\log u)d\mu-\frac{\tau D}{2t}\\
&\le -\frac{2t}{d}\int_Mu(L\log u)^2d\mu+\frac{2(k+\varsigma)}{\varsigma}\int_Mu\Gamma(\log u)d\mu
+2t(\varsigma-\rho_2)\int_Mu\Gamma^Z(\log u)d\mu-\frac{\tau D}{2t}.\nonumber
\end{align*}
The last inequality follows from the generalized curvature-dimension inequality $CD(\rho_1,\rho_2,k,d)$ with $\rho_1\ge0$.
Applying the Li-Yau type estimate \eqref{gradient1-100}, we have for $\rho_2\le \varsigma\le \frac{5}{3}\rho_2$,
\begin{align*}
\frac{2(k+\varsigma)}{\varsigma}\int_Mu\Gamma(\log u)d\mu
+2t(\varsigma-\rho_2)\int_Mu\Gamma^Z(\log u)d\mu&\le \frac{(k+\varsigma)(2\rho_2+3k)}{\varsigma\rho_2}\int_M u_td\mu+\frac{(k+\varsigma)D}{\varsigma t}\\
&=\frac{(k+\varsigma)D}{\varsigma t}.
\end{align*}
This gives
\begin{align*}
\frac{d}{dt}\mathcal{W}_{\varsigma}(u,t)\le -\frac{2t}{d}\int_Mu(L\log u)^2d\mu+\frac{D}{2t}\left(\frac{2(k+\varsigma)}{\varsigma}-\tau\right),
\end{align*}
hence for $\tau\ge 2+\frac{2k}{\rho_2}\ge 2+\frac{2k}{\varsigma}$, we have
$$
\frac{d}{dt}\mathcal{W}_{\varsigma}(u,t)\le0.
$$
We complete the proof.
\end{proof}
\begin{remark}

(i).  Theorem \ref{mono} and Theorem \ref{thm-perel} generalize Theorem 1.12 and Theorem 1.13 in \cite{CKL} respectively, where they consider the case of  sub-Laplacians on a closed pseudohermitian 3-manifold with Tanaka-Webster curvature bounded below, i.e. $CD(k,\frac12,1,2)$.

(ii). For the diffusion operators satisfying the curvature-dimension inequality  $CD(k,m)$ on a complete noncompact Riemannian manifold, the monotonicity of the Perelman entropy has been obtained in \cite{L1}, see also \cite{BG2,L2}. It would be very interesting to derive monotonicity of the Perelman type entropies  for subelliptic operator on complete but noncompact Riemannian manifolds, which will be studied in near future.

(iii). In the definitions of  \eqref{s1}, \eqref{nash2} and \eqref{perel1}, if we replace the parameter $D=d\left(1+\frac{3k}{2\rho_2}\right)^2$ by $\widetilde{D}=\frac{d\gamma^2}{4(\gamma-1)}\left(1+\frac{(1+\gamma)k}{\gamma\rho_2}\right)^2$ ($\gamma>1$), using the Li-Yau type inequality \eqref{gradient1-10} instead of \eqref{gradient1-100} in the above proofs,  we have for all $t>0$, $\gamma>1$ and $\tau\ge1$,
\begin{equation*}
\frac{d}{dt}\widetilde{N}(u,t)=\int_M u\left(\Gamma(g)+\left(1+\frac{(1+\gamma)k}{\gamma\rho_2}\right)g_t+\frac{(\gamma+1)k\tau \widetilde{D}}{2\gamma\rho_2t}\right)d\mu\le0.
\end{equation*}

Moreover, for $\gamma>1$, $\rho_2\le \varsigma\le \frac{3+\gamma}{1+\gamma}\rho_2$ and $\tau\ge 2+\frac{2k}{\rho_2}$, we have
 \begin{align*}
\frac{d}{dt}\mathcal{W}_{\varsigma}(u,t)\le -\frac{2t}{d}\int_Mu(L\log u)^2d\mu+\frac{\widetilde{D}}{2t}\left(\frac{2(k+\varsigma)}{\varsigma}-\tau\right)\le 0.
\end{align*}

If we choose $\gamma=2$, the above two results become Theorem \ref{mono} and Theorem \ref{thm-perel} respectively.
\end{remark}


\end{document}